\newtheorem{Pro}{Proposition}[subsection]
\newtheorem{Le}[Pro]{Lemma}
\newtheorem{Th}[Pro]{Theorem}
\newtheorem{Co}[Pro]{Corollary}
\theoremstyle{definition}
\theoremstyle{remark}
\newtheorem{Exm}[Pro]{Example}
\def\fork{\pitchfork}
\def\D{{\mathscr D}}
\def\m{{\mathfrak m}}
\def\n{{\mathfrak n}}
\def\af{\mathfrak a}
\def\fb{\mathfrak b}
\def\cf{\mathfrak c}
\def\pf{\mathfrak p}
\def\qf{\mathfrak q}
\def\fct{{\bf Fct}}
\def\fa{\mathfrak F}
\def\ea{{\mathscr T}}
\def\fa{{\mathscr F}}
\let\al\alpha
\let\bb\beta
\let\oO\Omega
\let\xto\xrightarrow
\def\0{^{[1]}}
\def\op{^{\mathrm{op}}}
\def\1{^{-1}}
\def\cref#1#2#3{\left(#2\right.\left|\ #3\right)_{#1}}
\def\ta{{\mathscr T}}
\def\ka{{\mathscr K}}
\def\f{{\mathscr F}}
\def\Topo{{\bf Top}}
\def\ope{{\bf Open}}
\def\vU{{\varUpsilon}}
\def\vX{{\varXi}}
\def\A{{\mathbb A}}
\def\J{{\mathbb J}}
\def\frm{{\mathfrak m}}
\def\Sh{{\sf Sh}}
\def\Spec{{\sf Spec}}
\def\Sets{{\mathscr{Sets}}}
\def\id{{\sf id}}
\def\Hom{{\sf Hom}}
\def\qc{{\mathfrak Q}{\mathfrak c}}
\def\Loc{\mathfrak{Loc}}
\def\hom{{\sf Hom}}
\numberwithin{equation}{subsection}
\begin{document}

\title{Reconstruction theorem for monoid schemes}

\author[I. Pirashvili]{Ilia Pirashvili}

\maketitle

\begin{abstract} We aim to reconstruct a monoid scheme $X$ from the category of quasi-coherent sheaves over it. This is much in the vein of Gabriel's original reconstruction theorem.

Under some finiteness condition on a monoid schemes $X$, we show that the localising subcategories of the topos $\mathfrak{Qc}(X)$ of quasi-coherent sheaves on $X$ is in a one-to-one correspondence with open subsets of $X$, while the elements of $X$ correspond to the topos points of $\mathfrak{Qc}(X)$. This allows us to reconstruct $X$ from $\mathfrak{Qc}(X)$.
\end{abstract}
\vspace{1em}

{\bf \emph{Keywords}:} \emph{Commutative monoid, monoid scheme, topos, localising subcategory, topos points}
\\
\\
\emph{MSC classes:	18B25, 20M14, 20M30}

\section{introduction}

According to the classical result of Gabriel \cite{ab}, a Noetherian scheme $S$ can be reconstructed from the category of quasi-coherent sheaves over $S$. The aim of this work is to modify an argument by Gabriel, to show that a similar statement is true for a monoid scheme of finite type.

To state our result, let us fix some terminology and notations. See any book in topos theory, or the main body of this paper for the terms used here. In what follows, $\Sets$ denotes the category of sets, which is the terminal topos. For a category $\A$, one denotes the centre of $\A$ by ${\mathcal Z}(\A)$, which, by definition, is the commutative monoid of all natural transformation of the identity functor $\id_\A$ to itself. We denote the collection of all isomorphism classes of points of a topos $\ea$ by ${\sf F}(\ea)$, while $\Loc(\ea)$ denotes the poset of all localising subcategories of $\ea$. For a localising subcategory $\D$ of $\ea$ and $p=(p_*,p^*):\Sets \to\ea$ a topos point of $\ea$, we write $p\fork \D$ if $p_*(S)\in \D$ for every set $S\in\Sets$.

Let $X$ be a monoid scheme. The poset of all open subsets of $X$ is denoted by ${\ope}(X)$. The structure sheaf of $X$ is denoted by $\mathcal{O}_X$ and the category of quasicoherent sheaves over $X$ by ${\mathfrak Q}{\mathfrak c}(X)$. A monoid scheme is called of \emph{finite type} if it can be covered by a finite number of affine subschemes $\Spec(M_i)$, such that each $M_i$ is a finitely generated monoid.  

The following is our version of the reconstruction theorem:

\begin{Th}[{\bf Main Theorem}]\label{maintheorem} Let $X$ be a Noetherian monoid scheme. The following results hold:
	\begin{itemize}
		\item[i)] The category ${\mathfrak Q}{\mathfrak c}(X)$ is a topos.
		\item[ii)] The map $\Phi:{\ope}(X)\to \Loc({\mathfrak Q}{\mathfrak c}(X))$, given by
		$$\Phi(U)={\mathfrak Q}{\mathfrak c}(U),$$
		is an isomorphism of posets
		\item[iii)] If $U$ is an open subset of $X$ and $\D=\Phi(U)$ is corresponding localising subcategory of ${\mathfrak Q}{\mathfrak c}(X)$, then
		$$\mathcal{O}_X(U)= {\mathcal Z}(\D).$$
		\item[iv)] For any point $x\in X$, the functor
		$$\qc(X)\to\Sets, \quad \mathcal{A}\mapsto \mathcal{A}_x,$$
		which assigns to a quasicoherent sheaf $\mathcal{A}$ the stalk $\mathcal{A}_x$ at $x$, is the inverse part of a point of the topos $\qc(X)$. Furthermore, the corresponding map $\Pi:X \to {\sf F}(\qc(X))$ on the underlying topological space of $X$ is a bijection if $X$ is quasi-projective.
		\item[v)]  If $x\in X$ and $U$ is an open subset of $X$, then $x\in U$ if and only if $\Pi(x)\fork \Phi(U).$
	\end{itemize} 
\end{Th}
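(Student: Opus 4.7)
The argument proceeds through the five parts in order. For (i), I would reduce to the affine case: for a commutative monoid $M$, the category $\qc(\Spec(M))$ identifies with the presheaf topos of $M$-sets, and the global statement follows because $\qc(X)$ is the limit of the $\qc(\Spec(M_i))$ along the intersections in a finite affine cover, such a limit of Grothendieck topoi again being a Grothendieck topos. For (ii), the open immersion $U \incl X$ yields a restriction--extension adjunction that realises $\qc(U)$ as a localising subcategory of $\qc(X)$, so $\Phi$ is well-defined and order-preserving; injectivity then follows from (iii). The principal challenge is surjectivity: given an arbitrary localising subcategory $\D$ of $\qc(X)$, I plan to construct $U$ as the union of open $V$ for which the extension of $\mathcal{O}_V$ lies in $\D$, and verify $\qc(U)=\D$, using the Noetherian hypothesis to control the union and the generating property of structure sheaves.

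For (iii), the crucial observation is that $\mathcal{O}_U$ is a generator of the topos $\qc(U)$, so every natural endomorphism of $\id_{\qc(U)}$ is determined by its component at $\mathcal{O}_U$; this yields $\mathcal{Z}(\qc(U)) = \End_{\qc(U)}(\mathcal{O}_U) = \mathcal{O}_X(U)$. For (iv), the stalk functor $\mathcal{A}\mapsto\mathcal{A}_x$ is a filtered colimit over open neighbourhoods of $x$, and since colimits in $\qc(X)$ are computed stalk-wise, it preserves both finite limits and all colimits, hence is the inverse image part of a topos point. Injectivity of $\Pi$ is clear, as distinct points are separated by suitable test sheaves. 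The main obstacle here is surjectivity: from a topos point $p$ of $\qc(X)$, produce a genuine point of $X$. I would first verify this for affine $X$ by showing that topos points of the topos of $M$-sets biject with prime ideals of $M$ (i.e.~points of $\Spec(M)$); the quasi-projective hypothesis would then let me glue these local statements via an embedding of $X$ as an open subscheme of a projective monoid scheme, where a finite affine cover allows the local bijection to globalise.

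Finally, part (v) follows by direct unwinding. The condition $\Pi(x)\fork\Phi(U)$ says that $p_*S \in \qc(U)$ for every set $S$, where $p^*(\mathcal{A})=\mathcal{A}_x$. If $x \in U$, the stalk point factors through the open subtopos $\qc(U)\incl\qc(X)$, so $p_*S$ automatically lies in $\qc(U)$. Conversely, if $x\notin U$, the pushforward of the terminal set supplies a sheaf whose stalk at $x$ is non-trivial, preventing it from being in the image of the $\qc(U)\incl\qc(X)$ inclusion, and giving the required contradiction.
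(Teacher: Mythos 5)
The central gap is in your proof of surjectivity in part (ii), which is the real content of the theorem. You propose to take an arbitrary localising subcategory $\D$ of $\qc(X)$, form $U$ as the union of those opens $V$ whose structure sheaves ``lie in $\D$'', and then ``verify $\qc(U)=\D$''. But no mechanism is offered for this verification, and that is precisely where the difficulty sits: a priori a topos such as $\Sets_M$ could have localising subcategories that do not come from any open subset of $\Spec(M)$ at all. The paper's route is to identify $\Loc(\Sets_M)$ with the lattice of Grothendieck (Gabriel-type) topologies on $M$, set up a Galois connection between topologies and subsets of $\Spec(M)$, and then prove --- using the ascending chain condition --- that every topology on an $s$-Noetherian monoid is \emph{stable}, i.e.\ determined by the set of primes it excludes (the monoid analogue of Gabriel's theorem for Noetherian rings, resting on the lemma that an ideal maximal among those outside the topology is prime). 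One then checks that stable subsets are exactly the order-open sets, that these coincide with Zariski-open sets when $M/M^\times$ is finitely generated, and that the topology attached to $D(f)$ really does cut out the $M_f$-sets. None of this is circumvented by your construction of $U$; without the stability argument you cannot exclude exotic localising subcategories, so the proposal does not establish surjectivity (nor, for that matter, injectivity, which you defer to (iii) --- two distinct opens can have isomorphic monoids of sections, so (iii) does not give injectivity either).

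There are two further problems. In (iii) you assert that $\mathcal{O}_U$ generates $\qc(U)$ and that the centre equals $\End(\mathcal{O}_U)$. For non-affine $U$ the structure sheaf is in general not a generator, and even when a generator $G$ exists the evaluation map $\mathcal{Z}(\qc(U))\to\End(G)$ is only injective; surjectivity is exactly the Yoneda argument that works in the affine case with $G=M$. The paper instead computes $\mathcal{Z}(\qc(U_1\cup U_2))$ as a pullback of the centres over $\mathcal{Z}(\qc(U_1\cap U_2))$ (Proposition \ref{lnt}) and reduces to the affine case. In (v), your converse fails as written: the pushforward $p_*(S)$ of a terminal set $S$ is the terminal sheaf, which lies in every localising subcategory, so it cannot witness $\Pi(x)\not\fork\Phi(U)$. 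The paper's converse instead invokes part (iv) applied to $U$: if $\Pi(x)$ factors through $\qc(U)$, the factored point is the stalk at some $y\in U$, and the resulting equality $F_x=F_y$ for all quasi-coherent $F$ forces $x=y\in U$. Relatedly, your claim in (iv) that topos points of $\Sets_M$ biject with primes of $M$ is false for general commutative monoids (points correspond to filtered colimits of representables), which is why the paper imposes the quasi-projective hypothesis and defers this to the earlier work it cites.
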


Parts i) and iv) were proven in \cite{points}. The aim of this work is to prove the rest.

\section{Preliminaries on topoi and localising subcategories}

\subsection{Grothendieck topologies and Grothendieck topoi}

Let $\A$ be a small category. A set valued presheaf (or simply presheaf) on $\A$ is a functor $P:\A^{op} \to \Sets$.

For an object $A$ in $\A$, a \emph{sieve} $S$ on $A$ is a subfunctor of the representable functor $\Hom_\A(-,A)$. Recall that a \emph{Grothendieck topology} $J$ on $\A$ assigns to each object $A$ in $\A$ a collection $JA$ of sieves on $A$, called \emph{covering sieves}, such that certain axioms hold. We refer to \cite{mm} (or \cite{hand3},\cite{topos}) for these axioms. However, we will list these axioms in Section \ref{tm}, for the special case when $\A$ is a one object category, corresponding to a monoid $M$.

A \emph{site} $(\A, J)$ consists of a small category $\A$, equipped with a Grothendieck topology $J$. A presheaf $F:\A^{op}\to \Sets$ is said to be a \emph{ sheaf} for the topology $J$ if, whenever we have a covering sieve $S$ of an object $A$, any morphism $\al:S\to F$ of presheaves has a unique extension $\Hom_\A(-,A)\to F$. The category of sheaves on $\A$ with respect to the topology $J$ is denoted by ${\sf Sh}(\A,J)$.

Recall that a \emph{Grothendieck topos} is a category $\ea$, equivalent to the category of set valued sheaves over a Grothendieck site.

If $\ea'$ and $\ea$ are topoi, a \emph{geometric morphism} $f:\ea'\to \ea$ is a pair of functors $f_*:\ea'\to \ea$ and $f^*:\ea\to \ea'$, such that $f^*$ is the left adjoint of $f_*$ and $f^*$ respects finite limits.

A \emph{point} of a topos $\ea$ is simply a geometric  morphism $p=(p_*.p^*):\Sets\to \ea$ from  the topos of sets to $\ea$.

For a small category $\A$, one denotes by $\Topo(\A)$ the collection of all Grothendieck topologies on $\A$. It is well-known that the inclusion induces an order on $\Topo(\A)$, which makes it  a  complete lattice \cite  [Lemma 0.34] {topos}. If $\J$ is the minimal (discrete) topology on $\A$, then the corresponding category $\ea={\sf Sh}(\A,J)$ is just the category $\fct(\A^{\op},\Sets)$ of all presheaves over $\A$. If $J$ is the maximal (indiscrete) topology, then $\ea={\sf Sh}(\A,J)$ is the trivial category, which has only one object and one morphism. For a site $(\A,J)$, we write $\Topo_J(\A)$ to denote the subclass of Grothendieck topologies containing $J$. This class is in one-to-one correspondence to two other classes, which can be described entirely in terms of the topos $\ea={\sf Sh}(\A,J)$. To describe them, we need some additional facts from topos theory.

\subsection{Subobject classifier}

It is well-known (see for example \cite{mm}), that any Grothendieck topos has a \emph{subobject classifier}. This is an object $\oO$, together with a morphism from the terminal object $t:1\to  \oO$, such that for any object $A$ and any subobject $i:B\subseteq A$, there is a unique morphism $\chi_B:A\to \oO$, for which
$$\xymatrix{ B\ar[r]^i\ar[d]_{_B!} &A \ar[d]^{\chi_B}\\ 1\ar[r]_t & \oO }$$
is a pullback diagram in $\ea$. Here and elsewhere, $_B!$ denotes the unique morphism from $B$ to the terminal. The map $\chi_B:A\to \oO$ is called the \emph{characteristic map} of  a subobject $B$. For example, $\chi_A=t\circ _A!$.

\subsection{Lawvere-Tierney topologies}

Let $\ea$ be a topos. Recall that a \emph{Lawvere-Tierney topology} on $\ea$ is given by a morphism $j:\oO\to \oO$, such that $j\circ j=j$ and the following diagrams commute:
$$\xymatrix{ 1\ar[r]^t \ar[rd]_t & \oO\ar[d]^j & & & & \oO\times \oO\ar[d]^{j\times j} \ar[r]^{\wedge} &\oO\ar[d]^j \\
			 & \oO & & & & \oO\times \oO\ar[r]^{\wedge} & \oO. }$$
Here, $\wedge:\oO\times \oO\to \oO$ is the characteristic map of the subobject $1=1\times 1 \xto{(t,t)}\oO\times \oO.$ The collection of all Lawvere-Tierney topologies on $\ea$ is denoted by ${\mathfrak L}{\mathfrak T}(\ea)$.

Let $j$ be a Lawvere-Tierney topology on $\ea$. An subobject $B\hookrightarrow A$ is called \emph{$j$-dense} if the following diagram commutes
$$\xymatrix{ & A\ar[ld]_{\chi_B} \ar[rd]^{\chi_A} & \\
			 \oO\ar[rr]^j & & \oO.}$$
A object $F$ of the topos $\ea$ is called a \emph{$j$-sheaf}, if for any object $A$ and any dense subobject $B\subseteq A$, the induced map
$$\Hom_\ea(A,F)\to \Hom_\ea(B,F)$$
is a bijection. Denote by $\Sh_j(\ea)$ the full subcategory of $\ea$ consisting of $j$-sheaves. It is well-known that $\Sh_j(\ea)$ is also a topos and the inclusion $\Sh_j(\ea) \to \ea$ has a left adjoint functor, which commutes with finite limits. 

\subsection{Localising subcategories}

Recall that a \emph{localising subcategory} $\D$ of a topos $\ea$ is a full subcategory of $\ea$, such that the following conditions hold:
 
\begin{itemize}
\item [ i)]  If $x$ belongs to $\D$  and $y\in \ea$ is  isomorphic to $x$, then $y$ belongs to $\D$.
\item [ ii)] The inclusion  $\iota:\D\to \ea$ has a left adjoint $\rho:\ea\to \D$, called the \emph{localisation}.
\item [ iii)] The localisation $\rho$  respects  finite limits.
\end{itemize}

If this is the case, then $\D$ is also a topos and the composite functor $\rho\circ \iota$ is isomorphic to  the identity functor $\id_\D$, see \cite[Proposition I.1.3]{GZ}. Moreover, the pair $(i,\rho)$ defines a geometric morphism $\D\to \ea$, known as \emph{embedding} \cite{mm}. Without loss of generality, we can always choose $\rho$ in such a way that $\rho\iota=\id_{\D}$.

We denote by $\mathfrak{Loc}({\ea})$ the collection of all localising subcategories of $\ea$. The collection $({\ea})$ has a natural poset structure induced by inclusion of subcategories.

The following well-known result explains the relationship between Grothendieck topologies and localising subcategories.

\begin{Le}\label{2510718} Let $(\A,J)$ be a site and $\ea={\sf Sh}(\A,J)$ the corresponding topos.

\begin{itemize}
\item[i)] For $K\in \Topo_J(\A)$, the category ${\sf Sh }(\A,K)$ is a localising subcategory of $\ea$ and the localising functor is the composite functor
$$\ea={\sf Sh}(\A,J)\subseteq \fct(\A^{op},\Sets)\to {\sf Sh }(\A,K),$$
where the last functor is the sheafification functor. 
\item[ii)] If $\D$ is a localising subcategory of $\ea$ with localisation $\rho:{\ea}\to \D$, we denote by $K$ the collection of all sieves $R\subseteq \Hom_\A(-,A)$ for which the induced morphism
$$\rho(R)\to \rho(\Hom_\A(-,A))$$
is an isomorphism. Then $K\in \Topo_J(\A)$.
\item[iii)] The correspondences defined in i) and ii) induces an order reversing bijection between $\Loc({\ea})$ and $\Topo_J(\A)$.
\item[iv)] The result remains true if one uses the Lawvere-Tierney topologies on $\ea$. That is, if $j$ is a Lawvere-Tierney topology on $\ea$, then $\Sh_j(\ea)$ is a localising subcategory of $\ea$. The localisation in this case is given by sheafification. In this way, one obtains an order reversing bijection
$${\mathfrak L}{\mathfrak T}(\ea)\cong \Loc({\ea}).$$
\end{itemize}
\end{Le}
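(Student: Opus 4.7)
The plan is to prove the four parts in order, relying at every step on standard facts about sheafification and localising reflective subcategories. For part (i), observe that since $K$ contains $J$, every $K$-covering sieve is already $J$-covering, hence every $K$-sheaf is automatically a $J$-sheaf, so $\Sh(\A,K) \subseteq \ea$. The sheafification functor $a_K:\fct(\A^{op},\Sets)\to \Sh(\A,K)$ is left adjoint to the inclusion and preserves finite limits by the usual plus-construction argument; its restriction to $\ea$ provides the required localisation, identifying $\Sh(\A,K)$ as an object of $\Loc(\ea)$.

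For part (ii), I would verify the topology axioms for $K$ directly, using only that $\rho:\ea\to\D$ is a finite-limit-preserving left adjoint. The maximal sieve is in $KA$ since $\rho$ preserves identities. Stability under pullback follows because the pullback of sieves is a finite limit of subobjects, and $\rho$ preserves such limits and isomorphisms. The delicate axiom is transitivity: if $R\in KA$ and $S$ is a sieve on $A$ such that $f^*S\in KB$ for every $f:B\to A$ in $R$, one must conclude $S\in KA$; this requires a diagram chase comparing the images under $\rho$ of $R$, $S$ and the fibre products $R\times_{\Hom_\A(-,A)}S$, exploiting that $\rho$ inverts every morphism in $R$ and in each $f^*S$. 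The inclusion $J\subseteq K$ is immediate because every $J$-covering sieve is already an isomorphism inside $\ea$, hence remains one after $\rho$.

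For part (iii), the two constructions are mutually inverse: given $K$, the sieve inclusions inverted by sheafification for $K$ are exactly the $K$-covering sieves, so the topology associated to $\Sh(\A,K)$ recovers $K$; conversely, starting from $\D$, the sheafification for the associated topology reproduces the original localisation, since both have the same class of local isomorphisms and a localisation is determined by the morphisms it inverts. Order reversal is clear: a larger topology imposes more sheaf conditions and hence yields a smaller localising subcategory. For part (iv), I would invoke the classical equivalence between Lawvere--Tierney topologies on $\ea$ and Grothendieck topologies on a site of definition of $\ea$ containing the original one (Mac Lane--Moerdijk, Chapter V): to $j$ one associates the topology $K$ for which $R\in KA$ iff the characteristic map of $R\subseteq \Hom_\A(-,A)$ factors through $j$, and $j$ is reconstructed from $K$ via the universal property of $\oO$ applied to $K$-closure of subobjects. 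Combined with (i)--(iii) this yields ${\mathfrak L}{\mathfrak T}(\ea)\cong \Loc(\ea)$. The principal obstacle is the transitivity axiom in (ii), the only step requiring a genuine argument rather than formal adjunction manipulations; everything else is unwinding definitions or invoking classical results.
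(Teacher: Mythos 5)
Your proposal is correct in outline, but it takes a different route from the paper only in the sense that the paper gives no proof at all: Lemma \ref{2510718} is stated as a known result and delegated to the literature (Borceux, Propositions 3.5.4 and 3.5.7, for the bijection in (iii), and Johnstone, Exercise 2 of Chapter 3, for (iv)). What you have written is essentially a reconstruction of the arguments found in those references, and your division of labour is the right one: (i) is immediate from $J\subseteq K$ plus the finite-limit-preservation of the plus-construction; in (ii) the maximal sieve and pullback-stability axioms are formal consequences of $\rho$ preserving finite limits and isomorphisms; and you correctly single out transitivity as the only step that is not a formal adjunction manipulation. That step is also the one place where your sketch stops short of a proof: the "diagram chase" you allude to is usually organised by first showing that the class of monomorphisms inverted by $\rho$ is a class of local isomorphisms (closed under composition and satisfying a local character condition), from which transitivity of $K$ falls out; this is exactly the content of the cited Borceux propositions. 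Two further small points deserve attention if you were to write this out in full. First, the sieves $R\subseteq\Hom_\A(-,A)$ live in $\fct(\A^{op},\Sets)$, not in $\ea$, so $\rho(R)$ must be read as $\rho$ applied to the $J$-sheafification of $R$; the lemma (and your sketch) silently make this identification, and the verification in (ii) that $J\subseteq K$ uses it. Second, for the bijectivity in (iii) you need not only that $K\mapsto\Sh(\A,K)\mapsto K$ is the identity (which rests on the standard fact that a sieve inclusion is inverted by $K$-sheafification iff it is $K$-covering) but also that every localising subcategory $\D$ is recovered from its associated topology; your appeal to "a localisation is determined by the morphisms it inverts" is the right principle, but it is precisely here that the non-trivial content of the Borceux result sits. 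With those caveats, your approach is sound and is the standard one.
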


This fact is well-known, see for example \cite[Propositions 3.5.4 and  3.5.7]{hand3}  for the  bijection in iii) and \cite[Exercise 2. Ch3]{topos} for the  bijection in iv). 

\subsection{Centre of a category}

One way to obtain a commutative  monoid from a category is to consider its centre. Recall that the \emph{centre} ${\mathcal Z}(\A)$ of a category $\A$ is the monoid of all natural transformation $\id_\A\to \id_\A$ of the identity functor to itself. It is well-known that  the monoid ${\mathcal Z}(\A)$ is commutative.

Thus, an element $\theta$ of the centre ${\mathcal Z}(\A)$ of $\A$ is a collection of morphisms $\theta_A:A\to A$, where $A$ is running through all the objects of $\A$, such that for any morphism $f:a\to b$ of the category $\A$, the diagram
$$\xymatrix{ a\ar[d]_f\ar[r]^{\theta_a} & a\ar[d]^f\\
			 b\ar[r]^{\theta_b} & b}$$
commutes.

Let $\ea$ be a topos. If $\D\subseteq \D'$ are localising subcategories of $\ea$ and $\theta$ is an element of ${\mathcal Z}(\D')$, then the collection $(\theta_x)_{x\in\D}$ belongs to ${\mathcal Z}(\D)$. Hence, one obtains the monoid homomorphism ${\mathcal Z}(\D')\to {\mathcal Z}(\D)$, called \emph{restriction}. This allows us to define a
$${\mathcal Z}:\Loc(\ea)^{op}\to {\sf Com.Monoids}.$$

\subsection{Gluing of topoi} Let ${\ea}_i$, $i=0,1,2$ be Grothendieck topoi and $\rho_i^*:{\ea}_i\to {\ea}_0$, $i=1,2$ localisations.
A \emph{gluing} (see \cite{points}) of ${\ea}_1$ and ${\ea}_2$ along ${\ea}_0$ is the category $\ea$, whose objects are triples $(A_1,A_2,\al)$, where $A_i$ is an object of ${\ea}_i$, $i=1,2$ and $\al:\rho_1^*(A_1)\to \rho_2^*(A_2)$ is an isomorphism in ${\ea}_0$. A morphism $(A_1,A_2,\al)\to (B_1,B_2,\beta)$ in ${\ea}$ is a pair $(f_1,f_2)$, where $f_i:A_i\to B_i$, is a morphism in ${\ea}_i$, $i=1,2$, such that $\rho_2^*(f_2)\al=\beta \rho_1^*(f_1)$.
We have the following diagram
$$\xymatrix{{\ea}\ar[r]^{pr_1}\ar[d]_{pr_2}&{\ea}_1\ar[d]_{\rho^*_1}\\ { \ea}_2\ar[r]_{\rho^*_2}&{\ea}_0,}$$
where $pr_i:{\ea}\to {\ea}_i$ is given by $pr_i(A_1,A_2,\al)=A_i$, $i=1,2$. 

Recall that an object $A$ of a Grothendieck topos ${\ea}$ is called \emph{$s$-Noetherian} if it satisfies the ascending chain condition on its subobjects. A Grothendieck topos ${\ea}$ is called \emph{locally $s$-Noetherian} provided it posses a system of s-Noetherian generators, see \cite[Section 2.4]{points} for more on this.

According to \cite{points}, $\ea$ is a locally $s$-Noetherian topos and the functors $pr_1$, $pr_2$ are localisations if  ${\ea}_i$, $i=0,1,2$, are locally $s$-Noetherian topoi.

\begin{Pro}\label{lnt} Let ${\ea}_i$, $i=0,1,2$ be locally $s$-Noetherian topoi, $\rho_i^*:{\ea}_i\to {\ea}_0$, $i=1,2$ localisations and $\ea$ be the corresponding gluing topos. The following is a pull-back diagram
$$\xymatrix{{\mathcal Z}(\ea) \ar[r] \ar[d]& {\mathcal Z}({\ea_1})\ar[d]\\
{\mathcal Z}({\ea_2})\ar[r] &{\mathcal Z}({\ea_0}).}$$

\end{Pro}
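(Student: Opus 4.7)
The plan is to construct a comparison map
$$\Psi:{\mathcal Z}(\ea)\to {\mathcal Z}(\ea_1)\times_{{\mathcal Z}(\ea_0)} {\mathcal Z}(\ea_2)$$
from the two restriction homomorphisms of Section~2.5, and to invert it explicitly. Viewing $\ea_i$ as a localising subcategory of $\ea$ via the right adjoint $\iota_i$ of $pr_i$ (with $pr_i\iota_i=\id$), any $\theta\in{\mathcal Z}(\ea)$ has restrictions $\theta|_{\ea_i}\in{\mathcal Z}(\ea_i)$, and compatibility of the two images in ${\mathcal Z}(\ea_0)$ is automatic by associativity of restriction along $\ea_0\subseteq\ea_i\subseteq\ea$.

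The first technical step is a computational lemma: if $\theta_{(A_1,A_2,\alpha)}=(f_1,f_2)$, then $f_i=(\theta|_{\ea_i})_{A_i}$; in particular, $f_i$ depends only on $A_i$, not on the remainder of the triple. I would prove this by applying naturality of $\theta$ to the unit $\eta:(A_1,A_2,\alpha)\to\iota_i pr_i(A_1,A_2,\alpha)$ of the adjunction $pr_i\dashv \iota_i$, then using the triangle identity $pr_i(\eta)=\id$. The analogous computation, applied to the localisation $\rho_i^*:\ea_i\to\ea_0$ in place of $pr_i$, yields the companion identity $\bar\theta^{(i)}_{\rho_i^*(A_i)}=\rho_i^*(\theta^{(i)}_{A_i})$, where $\bar\theta^{(i)}$ denotes the image of $\theta^{(i)}$ in ${\mathcal Z}(\ea_0)$.

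Injectivity of $\Psi$ is then immediate from the first identity. For surjectivity, given a compatible pair $(\theta^{(1)},\theta^{(2)})$ with common image $\phi\in{\mathcal Z}(\ea_0)$, I would set $\theta_{(A_1,A_2,\alpha)}:=(\theta^{(1)}_{A_1},\theta^{(2)}_{A_2})$ and verify two things. First, that this pair is a legitimate endomorphism of the triple in $\ea$, i.e., $\rho_2^*(\theta^{(2)}_{A_2})\alpha=\alpha\rho_1^*(\theta^{(1)}_{A_1})$: the companion identity rewrites both sides as $\phi$ evaluated at $\rho_2^*(A_2)$ and $\rho_1^*(A_1)$ respectively, and naturality of $\phi$ at the isomorphism $\alpha:\rho_1^*(A_1)\to\rho_2^*(A_2)$ supplies the equality. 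Second, naturality of the family $\theta$ in the triple reduces, via a morphism $(g_1,g_2):(A_1,A_2,\alpha)\to(B_1,B_2,\beta)$, to the separate naturality squares for $\theta^{(1)}$ at $g_1$ and $\theta^{(2)}$ at $g_2$. Compatibility of $\Psi$ with the monoid structure is then clear, since composition in $\ea$ is componentwise.

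The main obstacle I anticipate is setting up the restriction ${\mathcal Z}(\ea_i)\to{\mathcal Z}(\ea_0)$ cleanly: the localisation $\rho_i^*$ points opposite to the inclusion-based restriction of Section~2.5, so some care is needed to establish the identity $\rho_i^*(\theta^{(i)}_{A_i})=\bar\theta^{(i)}_{\rho_i^*(A_i)}$ for \emph{every} $A_i$, not only those in the essential image of the right adjoint of $\rho_i^*$. Once this identification is pinned down the remaining arguments are formal, and the locally $s$-Noetherian hypothesis enters the argument only through the cited fact from \cite{points} that the $pr_i$ are localisations.
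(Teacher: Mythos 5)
Your proposal is correct and follows essentially the same route as the paper: the naturality square you apply at the unit of $pr_i\dashv\iota_i$ is precisely the paper's explicit morphism $(\id_{A_1},\rho_{2*}(\alpha^{-1})\circ\kappa_2(A_2)):(A_1,A_2,\alpha)\to\iota_1(A_1)$, yielding the same uniqueness argument and componentwise construction. You merely spell out more carefully than the paper the existence step (that $(\theta^{(1)}_{A_1},\theta^{(2)}_{A_2})$ really is a morphism of the triple, via the companion identity and naturality of $\phi$ at $\alpha$), which the paper leaves implicit with ``yields the method to construct it.''
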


\begin{proof} The functors $\rho_i^*:\ea_i\to \ea_0$ have, by assumption, right adjoint functors $\rho_{i*}:\ea_0\to\ea_i$, $i=1,2$. Denote by $\kappa_i:\id_{\ea_i}\to \rho_{i*}\rho^*$, $i=1,2$ the counite of the adjoints. We may assume, without loss of generality, that $\rho^*_i\rho_{i*}(A_0)=A_0$ for any object $A_0$ of $\ea_0$.
Hence, the categories $\ea_0$, $\ea_1$ and $\ea_2$ can be identified respectively  with the following full subcategories of $\ea$:
$$\{(\rho_{1*}(A_0),\rho_{2*}(A_0), \id_{A_0})| A_0\in \ea_0\},$$
$$\{(A_1,\rho_{2*}\rho^*_1(A_1), \id_{\rho_{1*}A_1})| A_1\in \ea_1\},$$
$$\{(\rho_{1*}\rho^*_2(A_2),A_2, \id_{\rho_{2*}A_2})| A_2\in \ea_2\}.$$
We have to show that for any natural transformations $\xi_i:\id_{\ea_i}\to \id_{\ea_i}$, $i=1,2$, such that
$$\xi_1(\rho_{1*}(A_0))=\xi_2(\rho_{2*}(A_0))$$
for every object $A_0$ of $\ea_0$, there exists a unique natural transformation $\theta:\id_{\ea}\to \id_{\ea}$, such that
$$\theta(A_1,\rho_{2*}\rho^*_1(A_1), \id_{\rho_{1*}A_1})=(\xi_1(A_1), \rho_{2*}\rho^*_1\xi_1(A_1))$$
and
$$\theta(\rho_{1*}\rho^*_2(A_2),A_2, \id_{\rho_{2*}A_2})=(\rho_{1*}\rho^*_2\xi_2(A_2),\alpha_2(A_2))$$
holds for every $A_i\in\ea_i$, $i=1,2$.

To this end, take any object $(A_1,A_2,\alpha)$ of $\ea$. Since $\theta(A_1,A_2,\alpha)$ must be a morphism in $\ea$, we have
$$\theta(A_1,A_2,\alpha)=(\theta_1(A_1,A_2,\alpha),\theta_2(A_1,A_2,\alpha)),$$
where $$\theta_1(A_1,A_2,\alpha):A_1\to A_1\quad {\rm and} \quad \theta_2(A_1,A_2,\alpha):A_2\to A_2$$ are morphisms in $\ea_1$ and $\ea_2$ respectively. We have the following morphism in $\ea$:
$$(\id_{A_1}, \rho^*_2(\alpha^{-1})\circ \kappa_2(A_2)):(A_1,A_2,\alpha)\to (A_1,\rho_{2*}\rho^*_1(A_1), \id_{\rho_{1*}A_1}).$$
Applying the naturality of $\theta$ for this morphism, we obtain
$$\theta_1(A_1,A_2,\alpha)=\xi_1(A_1).$$
Similarly, we obtain $\theta_2(A_1,A_2,\alpha)=\xi_2(A_2)$. This, not only proves the uniqueness of $\theta$, but also yields the method to construct it and the result follows. \end{proof}

\section{Monoids, $M$-sets and subobject classifier}

We investigate some properties of the subobject classifier of the topos of $M$-sets, where $M$ is a monoid. The main result of this section says that if $M$ is $s$-Noetherian and commutative, then the subobject classifier respects localisations. This allows us to construct, in Theorem \ref{osch}, the subobject classifier of the topos $\qc(X)$ of quasi-coherent sheaves over $X$, where $X$ is a locally $s$-Noetherian monoid scheme.

\subsection{Subobject classifier for the topos of $M$-sets}

In this section, we will recall the construction of the subobject classifier of the topos of right $M$-sets $\Sets_M$, where $M$ is a multiplicatively written monoid \cite{mm}.

Elements of the set $\oO$ (also denoted as $\oO^{M}$, to make explicit the role of the monoid $M$) are right ideals of $M$.  We recall that a subset $\m\subseteq M$ is called a \emph{right ideal}, if $\m M\subseteq \m$. 
In order to describe the action of $M$ on $\oO$, for a right ideal $\m$ and an element $a\in M$, we set
$$(\m:a)=\{x\in M| ax\in \m\}.$$
\begin{Le}\label{21247} Let the notations be as above. One has:
\begin{itemize}
	\item[i)] $(\m:a)$ is a right ideal.
	\item[ii)] $(\m:1)= \m$,
	\item[iii)] $((\m:a):b)=(\m:ab),$
	\item[iv)] $(M:a)=M$.
	\end{itemize}
\end{Le}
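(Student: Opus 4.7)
The plan is to verify each of the four properties directly from the definition
$(\mathfrak{m}:a)=\{x\in M\mid ax\in\mathfrak{m}\}$, using only the fact that $\mathfrak{m}$ is a right ideal (so $\mathfrak{m}M\subseteq\mathfrak{m}$) and the monoid axioms (associativity and the identity law). None of the four claims should require more than a single line of argument.

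For (i), I would take $x\in(\mathfrak{m}:a)$ and $m\in M$, and check that $xm\in(\mathfrak{m}:a)$: indeed $a(xm)=(ax)m$, and $ax\in\mathfrak{m}$ by hypothesis, so $(ax)m\in\mathfrak{m}M\subseteq\mathfrak{m}$. Hence $(\mathfrak{m}:a)$ is closed under right multiplication by $M$, proving it is a right ideal. For (ii), $x\in(\mathfrak{m}:1)$ iff $1\cdot x=x\in\mathfrak{m}$, so $(\mathfrak{m}:1)=\mathfrak{m}$.

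For (iii), unfolding the definition twice: $x\in((\mathfrak{m}:a):b)$ iff $bx\in(\mathfrak{m}:a)$ iff $a(bx)\in\mathfrak{m}$; by associativity $a(bx)=(ab)x$, so this is equivalent to $x\in(\mathfrak{m}:ab)$. Finally (iv) is immediate, since $ax\in M$ trivially for every $x\in M$, so $(M:a)=M$.

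There is no substantive obstacle here; the whole statement is a collection of definitional verifications recorded for later use (presumably when setting up the $M$-action on the subobject classifier $\Omega^M$ whose elements are right ideals). The only point worth flagging is the use of associativity in (iii), which is essential to rewrite $a(bx)$ as $(ab)x$ so that the quotient by $ab$ appears on the right-hand side.
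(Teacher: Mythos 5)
Your proof is correct and follows essentially the same route as the paper: part (i) by checking closure under right multiplication using $a(xm)=(ax)m\in\m M\subseteq\m$, part (iii) by unfolding the definition twice and invoking associativity, and parts (ii) and (iv) as immediate consequences of the definition. Nothing to add.
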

\begin{proof} i) Take any element $m\in M$ and $x\in (\m:a)$. Then $ax\in \m$. Since $\m$ is a right ideal $(ax)m\in \m$. So $xm\in (\m:a)$.

ii) and iv) are obvious. 

iii) For an element $x\in M$, one has $x\in ((\m:a):b)$ if and only if $bx\in (\m:a)$. The last condition is equivalent to $abx\in \m$. In other words, $x\in ((\m:a):b)$ if and only if $x\in (\m:ab)$.
\end{proof}

\begin{Co} The rule $(\m,a)\mapsto (\m:a)$ defines a right $M$-set structure on $\oO$.
\end{Co}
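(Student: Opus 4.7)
The plan is to observe that this corollary is essentially just a repackaging of Lemma 3.1.1. To give $\oO$ the structure of a right $M$-set via the formula $\m\cdot a := (\m:a)$, I need to check three things: (a) the operation is well-defined, meaning $(\m:a)$ is again a right ideal of $M$ and hence an element of $\oO$; (b) the identity of $M$ acts as the identity, i.e.\ $\m\cdot 1=\m$; and (c) the action is compatible with multiplication in $M$, i.e.\ $(\m\cdot a)\cdot b = \m\cdot (ab)$ for all $a,b\in M$.

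First, I would invoke part i) of Lemma 3.1.1 to get (a): for any right ideal $\m\subseteq M$ and any $a\in M$, the set $(\m:a)$ is again a right ideal, so the assignment $(\m,a)\mapsto (\m:a)$ is a well-defined map $\oO\times M\to \oO$. Next, part ii) of the lemma gives (b) directly, namely $\m\cdot 1 = (\m:1)=\m$. Finally, (c) is exactly part iii): unwinding the definition of the action, $(\m\cdot a)\cdot b = ((\m:a):b) = (\m:ab) = \m\cdot(ab)$.

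Since all three axioms of a right $M$-set are verified, the corollary follows. There is no real obstacle here; the work has already been done in the preceding lemma, and the only thing to be careful about is the order of composition, which matches the right-action convention precisely because Lemma 3.1.1 iii) is phrased as $((\m:a):b)=(\m:ab)$ rather than $(\m:ba)$.
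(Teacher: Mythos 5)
Your proof is correct and is exactly the intended argument: the paper states this corollary without further proof precisely because parts i)--iii) of Lemma \ref{21247} are the three right $M$-set axioms (well-definedness, unit, and associativity of the action). Your remark about the order of composition matching the right-action convention via $((\m:a):b)=(\m:ab)$ is the one point worth being careful about, and you handled it correctly.
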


By iv) of Lemma \ref{21247}, the map $t:1\to \oO$, given by $t(1)=M\in \oO$, is a morphism of $M$-sets. For any right $M$-set $A$ and any $M$-subset $i:B\subseteq A$, define the map $\chi_B:A\to \oO$ by
$$\chi_B(a)=\{m\in M| am\in B\}.$$
One easily sees that $\chi_B$ is a morphism of $M$-sets and the diagram
$$\xymatrix{ B\ar[r]^i\ar[d]_{_B!} &A \ar[d]^{\chi_B}\\ 1\ar[r]_t & \oO }$$
is a pullback diagram. Hence, $\oO$ is the subobject classifier in $\Sets_M$  \cite{mm}. 

\subsection{Grothendieck topologies on a monoid}\label{tm}

According to Lemma \ref{2510718}, there is an order reversing bijection between the set $\Loc(\Sets_M)$ and all Grothendieck topologies defined on the one object category corresponding to $M$. Let us recall the last notion in this particular case.

For a monoid $M$ (not nesseseraly commutative), a \emph{Grothendieck topology}, or simply  a \emph{topology}, $\fa$ on $M$ is a collection of right ideals, such that
\begin{itemize} 
\item[(T1)]  $M\in \fa$,
\item[(T2)]If $\af\in \fa$ and $m\in M$, then $(\af:m)\in \fa$,
\item[(T3)] If $\fb\in \fa$ and $\af$ is a right ideal of $M$ such that  $(\af:m)\in \fa$ for any $m\in \fb$, then $\af\in \fa$.
\end{itemize}

Thus, $\fa$ is a subobject of the $M$-set $\oO_M$. Since $\oO_M$ is a subobject classifier, it defines a map $\chi_\fa:\oO_M\to \oO_M$, which is a Lawvere-Tierney topology on $\Sets_M$ and any Lawvere-Tierney topology on $\Sets_M$ is of this form. Explicitly, the map $\chi_\fa$ is given by
$$\chi_\fa(\af)=\{m\in M| (\af:m)\in \fa\}.$$

This result draws a clear analogy to Gabriel's topologies in ring theory \cite{bo}. We will see tat many of the results originally proven for rings by Gabriel \cite{ab} are still valid for monoids.
 
\begin{Le}\label{e1} Let $\fa$ be a topology on $M$. The following hold:
\begin{itemize}
\item[(i)] If $\af\subseteq \fb$ are right ideals and $\af\in \fa$, then $\fb\in \fa$.
\item[(ii)] If $\af, \fb\in \fa$, then $\af\cap \fb\in \fa$.
\item[(iii)] If $\af, \fb\in \fa$, then $\af\fb\in\fa$.
\end{itemize}

\end{Le}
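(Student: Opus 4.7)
The plan is to verify each part by a direct application of axiom (T3), choosing the ``known'' covering ideal appropriately in each case. The common underlying observation is that if $\m$ is any right ideal and $m \in \m$, then $(\m:m) = M$, since $\m$ being a right ideal means $mx \in \m$ for every $x \in M$; combined with (T1), this gives $(\m:m) \in \fa$. This is the only non-trivial computation in the whole lemma.

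For (i), I would invoke (T3) with the known covering ideal taken to be $\af \in \fa$ and the target right ideal taken to be $\fb$. For every $m \in \af \subseteq \fb$, the observation above applied to $\fb$ gives $(\fb:m) = M \in \fa$, so (T3) yields $\fb \in \fa$.

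For (ii), I would invoke (T3) with covering ideal $\fb \in \fa$ and target $\af \cap \fb$. The identity $(\af \cap \fb : m) = (\af:m) \cap (\fb:m)$ follows directly from the definition, and for $m \in \fb$ we have $(\fb:m) = M$, so $(\af \cap \fb : m) = (\af:m)$. By (T2) applied to $\af \in \fa$, this lies in $\fa$, and (T3) gives $\af \cap \fb \in \fa$.

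For (iii), I would first note that $\af\fb = \{ab : a \in \af,\ b \in \fb\}$ is itself a right ideal (because $\fb$ is), and then apply (T3) with covering ideal $\af \in \fa$ and target $\af\fb$. For $m \in \af$ and any $x \in \fb$, $mx \in \af\fb$ by definition, so $\fb \subseteq (\af\fb:m)$; since $\fb \in \fa$, part (i), which is already in hand, yields $(\af\fb:m) \in \fa$, and (T3) delivers $\af\fb \in \fa$. There is no real obstacle in any of these arguments; the only piece of bookkeeping is the order, namely to prove (i) first so that it can be invoked in the proof of (iii).
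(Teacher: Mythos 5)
Your proof is correct and follows essentially the same route as the paper's: each part is established by choosing the covering ideal for (T3) to be one of the given ideals in $\fa$ and observing that $(\m:m)=M$ whenever $m\in\m$ (the paper swaps the roles of $\af$ and $\fb$ in part (ii), but this is immaterial by symmetry). Part (iii) in the paper likewise reduces to the containment $\fb\subseteq(\af\fb:m)$ together with part (i), exactly as you do.
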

\begin{proof} i) Take any element $m\in \af$. Then $m\in \fb$ and $(\fb:m)=R\in \fa$. Hence, by (T3), we have $\fb\in \fa$.

ii) If $m\in \af$, then $(\af\cap \fb:m)=(\fb:m)\in \fa$ by (T2). Hence, $\af\cap \fb\in \fa$, by (T3).

iii) For any $m\in \af$, we have $\fb\subseteq (\af\fb:m)$. Hence, by i), we have $(\af\fb:m)\in \fa$ and (T3) implies that $\af\fb\in \fa$.
\end{proof}

For a monoid $M$, we let $\Topo(M)$ denote the set of all topologies on $M$. As was already mentioned, $\Topo(M)$ is closed under arbitrary intersections  and is a complete lattice, where the ordering is given by inclusion.  The least element is just $\{M\}$, while the greatest element is the set of all right ideals.

\subsection{Monoid homomorphisms and subobject classifier}

Let $f:M\to N$ be a monoid homomorphism. One has the following set maps between the subobject classifiers of $M$ and $N$:
$$f^*:\oO^N\to \oO^M, \  \ {\rm and}\ \  f_*:\oO^M\to \oO^N$$
defined by
$$f^*(\n)=f^{-1}(\n), \  \ {\rm and}\ \ f_*(\m)=\bigcup_{x\in \m}f(x)N.$$
We point out that $\bigcup\limits_{x\in \m}f(x)N$ is the right ideal generated by $f(\m)$.

\begin{Le}\label{020218}
For any $\m\in\oO^M$, $\n\in\oO^N$ and $a\in M$, one has
\begin{itemize}
	\item[i)] $\m\subseteq f^*f_*(\m), \ \ f_*f^*(\n)\subseteq \n$,
	\item[ii)] $f_*f^*f_*(\m)=f_*(\m), \ \ f^*f_*f^*(\n)=f^*(\n)$,
	\item[iii)] $f_*(\m :a)\subseteq (f_*(\m):f(a))$,
	\item[iv)] $(f^*(\n):a)=f^*(\n:f(a))$.
\end{itemize}
\end{Le}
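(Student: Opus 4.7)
My plan is to prove the four parts in order, relying only on the definitions $f^*(\n) = f^{-1}(\n)$ and $f_*(\m) = \bigcup_{x\in\m} f(x)N$, together with the fact that both maps are monotone with respect to inclusion of right ideals.

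For part (i), the inclusion $\m \subseteq f^*f_*(\m)$ is immediate from $f(x) = f(x)\cdot 1 \in f(x)N \subseteq f_*(\m)$ for each $x \in \m$, so $x \in f^{-1}(f_*(\m))$. For $f_*f^*(\n) \subseteq \n$, it suffices to check that each generator $f(x)n$ with $x \in f^{-1}(\n)$ lies in $\n$: this holds because $f(x) \in \n$ and $\n$ is a right ideal. Part (ii) I would then deduce formally from (i) by the adjunction-style argument. Applying the monotone map $f_*$ to $\m \subseteq f^*f_*(\m)$ yields $f_*(\m) \subseteq f_*f^*f_*(\m)$, while the second inclusion of (i) specialised to $\n := f_*(\m)$ gives the reverse $f_*f^*f_*(\m) \subseteq f_*(\m)$. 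The identity $f^*f_*f^*(\n) = f^*(\n)$ is handled symmetrically by applying $f^*$ to $f_*f^*(\n) \subseteq \n$ and specialising the other inclusion to $\m := f^*(\n)$.

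For part (iii), I would chase generators: any $y \in f_*(\m:a)$ is of the form $y = f(x)n$ with $x \in (\m:a)$, i.e.\ $ax \in \m$. Then $f(a)y = f(ax)n$ lies in $f_*(\m)$ because $f(ax) \in f_*(\m)$ and $f_*(\m)$ is a right ideal by construction, hence $y \in (f_*(\m):f(a))$. Part (iv) is a direct elementwise equivalence: $x \in (f^*(\n):a)$ iff $ax \in f^{-1}(\n)$ iff $f(a)f(x) \in \n$ iff $f(x) \in (\n:f(a))$ iff $x \in f^*(\n:f(a))$.

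The only genuinely subtle point is the asymmetry between (iii) and (iv): because $f^{-1}$ commutes with every set-theoretic operation but $f_*$ involves closing $f(\m)$ under the right action of $N$, one obtains a strict equality on the $f^*$-side and only an inclusion on the $f_*$-side. I do not expect any real obstacle; the whole statement is a bookkeeping exercise with the definitions, and the main care is simply in keeping track of which side one is working on.
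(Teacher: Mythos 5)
Your proof is correct and follows essentially the same route as the paper: parts (i) and (iv) by direct unwinding of the definitions, (ii) as a formal consequence of (i), and (iii) by checking generators of $f_*(\m:a)$ land in $(f_*(\m):f(a))$. The only difference is that you spell out details the paper leaves implicit.
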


\begin{proof} i) is clear and ii) is a formal consequence of i).

To prove iii), observe that $f_*(\m :a)$ is a right ideal of $N$, generated by elements of the form $f(x)$, where $ax\in \m$. It follows that $f(ax)\in f_*(\m)$. Hence, $f(x)\in (f_*(\m):f(a))$, showing iii).

For iv), observe that
\begin{align*}
(f^*(\n):a)&=\{x\in M| ax\in f^*(\n)\}\\
&=\{x\in M|f(ax)\in \n\}\\
&=\{x\in M|f(x)\in (\n:f(a))\}\\
&= f^*(\n:f(a)).
\end{align*}
\end{proof}

\begin{Co}
The map $f^*:\oO^N\to \oO^M$ (unlike $f_*:\oO^M\to \oO^N$) is a morphism of $M$-sets. Here and in what follows, the monoid $M$ acts on $\oO^N$ via $f$.
\end{Co}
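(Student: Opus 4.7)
The plan is to observe that the corollary is essentially a repackaging of parts (iii) and (iv) of the preceding lemma. First I would unwind the hypothesis that $M$ acts on $\oO^N$ via $f$: by definition, the action is obtained by pulling back along $f$ the intrinsic $N$-action on $\oO^N$, so for $a\in M$ and $\n\in\oO^N$ the formula is $\n\cdot a = (\n:f(a))$. The condition that $f^*$ be an $M$-equivariant map therefore translates to the identity
$$f^*\bigl((\n:f(a))\bigr)=\bigl(f^*(\n):a\bigr)$$
for every $\n\in\oO^N$ and $a\in M$. This is exactly the content of Lemma \ref{020218}(iv), so nothing further is required to establish the main assertion.

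For the parenthetical remark that $f_*$ fails to be an $M$-set morphism in general, the analogous condition would read $f_*((\m:a))=(f_*(\m):f(a))$, but Lemma \ref{020218}(iii) only furnishes the inclusion $\subseteq$. To show this inclusion is genuinely strict I would supply a small counterexample, e.g.\ a non-surjective homomorphism $f:M\to N$ and a right ideal $\m\subseteq M$ with $a\in M$ chosen so that the right ideal $(f_*(\m):f(a))$ of $N$ contains elements not of the form $f(x)N$ for $x\in(\m:a)$; this is easy to arrange whenever $f$ is not surjective, because the right-hand side is defined by an equation in $N$ while the left-hand side is generated only by images from $M$.

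The only step that requires any thought is the translation of the statement ``$f^*$ is a morphism of $M$-sets'' into the algebraic identity above, and that hinges purely on remembering how the action on $\oO^N$ is transported along $f$; once this is written down, the corollary is immediate from the lemma.
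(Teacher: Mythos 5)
Your proof is correct and matches the paper's (implicit) argument exactly: the corollary is immediate from Lemma \ref{020218}(iv), since $M$-equivariance of $f^*$ is precisely the identity $f^*\bigl((\n:f(a))\bigr)=\bigl(f^*(\n):a\bigr)$. The paper offers no separate proof, and your translation of the action on $\oO^N$ via $f$ is the only step needed.
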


\subsection{Localisation}

Let $M$ be a commutative monoid. Recall that if $S\subseteq M$ is a submonoid, one can form a new monoid $S^{-1}M$, called the \emph{localisation of $M$ by $S$}. Elements of $S^{-1}M$ are fractions $\frac{m}{s}$, where $m\in M$ and $s\in S$. By definition, $\frac{m_1}{s_1}=\frac{m_2}{s_2}$ if and only if there is an element $s\in S$, such that $m_1ss_2=m_2ss_1.$  Actually, this construction can be extended to $M$-sets in the obvious way. It is also obvious that if $A$ is a $M$-set, then $S^{-1}A$ becomes an $S^{-1}M$-set. The canonical map
$$f:M\to S^{-1}M,  \ \  f(m)=\frac{m}{1}$$
is a monoid homomorphism, called the localisation map.

\begin{Le}\label{080217} Let $S$ be a submonoid of a commutative monoid $M$ and let $f:M\to S^{-1}M$ be the corresponding localisation.
\begin{itemize}
	\item[i)] If $\n$ is an ideal of $S^{-1}M$, then $f_*f^*(\n)=\n$.
	\item[ii)] $f_*(\m :a)=(f_*\m:f(a))$.
	\item[iii)] $f^*f_*(\m)=\bigcup _{s\in S} (\m:s)$.
	\item[iv)]  For any $t\in S$, one has
$$(f^*f_*(\frm):t)=f^*f_*(\frm).$$
\end{itemize}
\end{Le}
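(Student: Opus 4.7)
The plan is to push each identity directly out of the definitions, leaning on two simplifications that come from $M$ being commutative and $\m$ being a right ideal. First, the generated ideal admits the concrete description
\[ f_*(\m) = \{m/s : m \in \m,\ s \in S\}, \]
since any element $f(x)\cdot(n/t)$ with $x\in\m$ already equals $(xn)/t$ with $xn\in\m$. Second, the fraction equality $m_1/s_1 = m_2/s_2$ is controlled by the existence of a witness $u \in S$ with $m_1 u s_2 = m_2 u s_1$. These two facts, together with the one-sided inclusions already supplied by Lemma \ref{020218}, will do all the work.

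For (i), the containment $f_*f^*(\n) \subseteq \n$ is immediate from Lemma \ref{020218}(i). For the reverse, given $y/s \in \n$, I multiply by $s/1$ to get $y/1 = (s/1)(y/s) \in \n$, whence $y \in f^*(\n)$ and $y/s = (y/1)(1/s) \in f_*f^*(\n)$. The argument for (ii) is similar in spirit: Lemma \ref{020218}(iii) gives $\subseteq$, and for $\supseteq$ I take $y/s \in (f_*\m : f(a))$, unfold $ay/s \in f_*\m$ via the description of $f_*\m$ and the fraction-equality criterion to produce $m \in \m$ and $t,u \in S$ with $ayut = msu \in \m$; using commutativity this gives $yut \in (\m:a)$, and consequently $y/s = (yut/1)(1/(sut)) \in f_*(\m:a)$.

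For (iii), I chase definitions: $x \in f^*f_*(\m)$ iff $x/1 \in f_*\m$, iff $x/1 = m/t$ for some $m \in \m$, $t \in S$, iff, by the fraction-equality criterion, there is $u \in S$ with $xtu = mu \in \m$; setting $s = tu \in S$ this is exactly $x \in (\m:s)$. The converse is trivial, since $xs \in \m$ forces $x/1 = xs/s \in f_*(\m)$. Part (iv) is then a formal consequence of (iii): if $xs \in \m$ then $(tx)s = t(xs) \in \m$ because $\m$ is a right ideal, so $tx \in (\m:s) \subseteq f^*f_*(\m)$, giving $\supseteq$; conversely, if $tx \in (\m:s)$ then $x \in (\m:ts)$ with $ts \in S$, so $x \in f^*f_*(\m)$, giving $\subseteq$.

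The only real obstacle is bookkeeping the witnesses introduced by the fraction equality, and checking that commutativity is legitimately invoked at each step (it is, since $M$ is assumed commutative throughout this subsection). No ingredients beyond Lemma \ref{020218} and the standard behaviour of fractions in a localisation are needed.
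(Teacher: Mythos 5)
Your proof is correct and follows essentially the same route as the paper's: reduce each equality to the nontrivial inclusion via Lemma \ref{020218}, then chase fractions using the explicit description of $f_*(\m)$ and the witness $u\in S$ in the equality $m_1/s_1=m_2/s_2$. The only cosmetic difference is that you state the identification $f_*(\m)=\{m/s: m\in\m,\ s\in S\}$ explicitly, which the paper uses implicitly.
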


\begin{proof} i) By part i) of Lemma \ref{020218}, it suffice to show that $\n\subseteq f_*f^*(\n)$. Take an element $x=\frac{n}{s}\in \n$. Since $\n$ is an ideal, we see that $sx\in \n$. Hence, $n\in f^*(\n)$ and $sx=f(n)\in f_*f^*(\n)$. Since $ f_*f^*(\n)$ is an ideal of $S^{-1}M$, we see that $x=f(n)\cdot \frac{1}{s} \in f_*f^*(\n)$ and i) follows.

ii) By part iii) of Lemma \ref{020218}, it suffices to show that $(f_*(\m):f(a))\subseteq f_*(\m :a) $. To this end, take an element $z\in (f_*\m:f(a))$. Since $z\in S^{-1}M$, we can write $z=\frac{m}{s}$, where $m\in M$ and $s\in S$.
By assumption, $zf(a)\in f_*(\m)$. So, $\frac{ma}{s}\in f_*(\m)$. Thus, there exist an element $u\in \m$ and $t\in S$, such that
$$\frac{ma}{s}=\frac{u}{t}.$$
This implies that $matt'=ust'$, for an element $t'\in S$. We have $z=\frac{m'}{s'}$, where $m'=mtt'$ and $s'=stt'$. Since $am'=amtt'=ust'\in \m$, we have $z\in (f_*(\m):a)$. This proves the result.

iii) We have $x\in f^*f_*(\m)$ if and only if $\frac{x}{1}\in f_*(\m)$. This is equivalent to saying that $\frac{x}{1}=\frac{m}{s}$, for some $m\in \m$, $s\in S$. The last condition holds if and only if $xt\in \m$ for some $t\in S$, i.e. when $x\in (\m:t)$ and the result follows.

iv)  Since  $\n\subseteq (\n:a)$ for any ideal $\n$ in a commutative monoid $M$, we only need to show $(f^*f_*(\frm):t) \subseteq f^*f_*(\frm)$. Take $z\in (f^*f_*(\frm):t)$, then
$$zt\in f^*f_*(\frm)=\bigcup _{s\in S} (\frm:s).$$
Thus there exists $s\in S$ such that $zts\in \frm$. Hence,
$$z\in (\frm:ts)\subseteq \bigcup _{r\in S} (\frm:r)= f^*f_*(\frm)$$
and iv) follows.
\end{proof}

\begin{Co}\label{342003} The map $f_*:\oO^M\to \oO^{S^{-1}M}$ is compatible with the monoid actions and as such, induces the map of $S^{-1}M$-sets
$$f_{*S}:S^{-1}\oO^M\to \oO^{S^{-1}M},$$
which is surjective.
\end{Co}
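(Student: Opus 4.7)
The plan is to verify each of the three assertions in turn, using Lemma \ref{080217} as the main tool. The compatibility of $f_*$ with the monoid action---where $\oO^{S^{-1}M}$ is regarded as an $M$-set via $f$, so that $a\in M$ acts by $\n\mapsto (\n:f(a))$---is exactly the identity $f_*(\m:a)=(f_*(\m):f(a))$, which is part (ii) of that lemma. This is the essential new content of the statement, since the preceding corollary warned that $f_*$ is not $M$-equivariant in general; it is the invertibility of elements of $f(S)$ in $S^{-1}M$ that makes things work here.

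To construct $f_{*S}$, I would invoke the universal property of the localisation $S^{-1}\oO^M$ of the $M$-set $\oO^M$: any $M$-equivariant map from $\oO^M$ into an $S^{-1}M$-set factors uniquely through an $S^{-1}M$-equivariant map out of $S^{-1}\oO^M$. Concretely, one sets $f_{*S}\bigl(\tfrac{\m}{s}\bigr):=f_*(\m)$. Well-definedness uses the easy observation that, for any unit $u$ in a commutative monoid and any ideal $\n$, one has $(\n:u)=\n$; applied with $u=f(s)$ for $s\in S$, this says the $f(s)$-action on $\oO^{S^{-1}M}$ is the identity, so combining with part (ii) of Lemma \ref{080217} the assignment respects the equivalence relation defining $S^{-1}\oO^M$.

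Finally, for surjectivity, I would exhibit a set-theoretic section: given $\n\in\oO^{S^{-1}M}$, part (i) of Lemma \ref{080217} gives $f_*(f^*(\n))=\n$, so the class of $f^*(\n)/1$ in $S^{-1}\oO^M$ maps to $\n$ under $f_{*S}$. I do not anticipate a serious obstacle, since the substantive computations are already contained in Lemma \ref{080217}; the only point requiring care is the correct identification of the $M$-action on $\oO^{S^{-1}M}$ via $f$ and the resulting triviality of the $f(S)$-action, which is what makes the factorisation through $S^{-1}\oO^M$ possible.
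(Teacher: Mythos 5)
Your proposal is correct and follows the paper's own (very terse) argument exactly: equivariance of $f_*$ is part (ii) of Lemma \ref{080217}, surjectivity comes from $f_*f^*(\n)=\n$ in part (i), and the factorisation through $S^{-1}\oO^M$ is the routine universal-property step you spell out (including the correct observation that units act trivially on the subobject classifier, so $f_{*S}(\frac{\m}{s})=f_*(\m)$ is well defined). No gaps.
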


\begin{proof} The first assertion follows from the part ii) of Lemma \ref{080217}, while surjectivity property from  part i) of Lemma \ref{080217}.
\end{proof}

Thus, by construction, we have a commutative diagram
$$\xymatrix{ \oO^M\ar[rr] \ar[dr]_{f_*} & & S^{-1}\oO^M\ar[dl]^{f_{*S}}\\
			 &\oO^{S^{-1}M}, & }$$
where the horizontal map is the localization: $\m\mapsto \frac{\m}{1}$.

\subsection{$s$-Noetherian monoids}

Following \cite{points}, a monoid $M$ is called \emph{right $s$-Noetherian} if $M$ satisfies the ascending chain condition on right ideals Equivalently, any family of right ideals of $M$ contains a maximal member and this holds if and only if any right ideal of $M$ is finitely generated.

\begin{Exm}\label{snoe}
\begin{itemize}
	\item[i)]  Any finite monoid is right $s$-Noetherian.
	\item[ii)] Any group is right $s$-Noetherian.
	\item[iii)] Let $M$ be a commutative and finitely generated monoid. Then $M$ is $s$-Noetherian, see \cite{points}.
	\item[iv)] A commutative monoid $M$ is $s$-Noetherian if and only if $M_{\sf red}= M/M^\times$ is $s$-Noetherian \cite{points}. Here, $M^\times$ is the group of invertible elements of $M$.
\end{itemize}
\end{Exm}

\subsection{Subobject classifier of $s$-Noetherian monoids}

We will show that under the $s$-Noetherianness assumption, the functor
$$f_!:\Sets_M\to \Sets_{S^{-1}M}; \ f_!(A)=S^{-1}A$$
respects the subobject classifier. This is due to the following Proposition:

\begin{Pro}\label{oOloc} Let $M$ be a commutative and $s$-Noetherian monoid. The canonical map
$$f_{*S}:S^{-1}\oO^M\to \oO^{S^{-1}M}$$
is an isomorphism of $S^{-1}M$-sets for any submonoid $S\subseteq M$.
\end{Pro}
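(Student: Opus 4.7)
The plan is to establish injectivity of $f_{*S}$; surjectivity is already Corollary \ref{342003}.

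First I would simplify $f_{*S}(\m/s)$. By the construction of $f_{*S}$ from the commutative triangle preceding the statement and by its $S^{-1}M$-equivariance, one has $f_{*S}(\m/s) = (f_*(\m):1/s)$. For any ideal $J$ of a commutative monoid and any unit $u$, $(J:u)=J$, because $ux\in J$ iff $x=u^{-1}(ux)\in J$. Since $1/s$ is a unit of $S^{-1}M$, this yields $f_{*S}(\m/s)=f_*(\m)$, independently of $s$. Hence $f_{*S}(\m/s)=f_{*S}(\n/t)$ reduces to $f_*(\m)=f_*(\n)$; applying $f^*$ and invoking Lemma \ref{080217}(iii), this is equivalent to
$$\bigcup_{s\in S}(\m:s) \;=\; \bigcup_{s\in S}(\n:s).$$

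Next I would use the $s$-Noetherian hypothesis to stabilise both unions. Each side is a right ideal of $M$, hence finitely generated, say by $a_1,\dots,a_n$ with $a_i\in(\m:s_i)$ and by $b_1,\dots,b_m$ with $b_j\in(\n:s_j')$. Setting $u=s_1\cdots s_n\,s_1'\cdots s_m'\in S$, and using the monotonicity $s\mid s'\Rightarrow(\m:s)\subseteq(\m:s')$, which is immediate from Lemma \ref{21247}(iii) since $\m$ is an ideal, we obtain
$$(\m:u) \;=\; \bigcup_{s\in S}(\m:s) \;=\; \bigcup_{s\in S}(\n:s) \;=\; (\n:u).$$

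Finally I would translate this back into equality in $S^{-1}\oO^M$. Since $u\mid ut$ in $M$ we get $(\m:u)\subseteq(\m:ut)$, while the reverse inclusion holds because $(\m:ut)$ is one of the terms of the saturated union already equal to $(\m:u)$; hence $(\m:ut)=(\m:u)$. Symmetrically $(\n:us)=(\n:u)$, so $(\m:ut)=(\n:us)$, which is exactly the witness certifying $\m/s=\n/t$ in $S^{-1}\oO^M$. This yields injectivity.

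The main obstacle is the stabilisation step: the family $\{(\m:s)\}_{s\in S}$ is directed by divisibility in $S$ but is in general not a chain, so one cannot invoke the ascending chain condition along a single sequence. The $s$-Noetherian hypothesis bypasses this by forcing the union to be finitely generated, allowing a single $u\in S$ to absorb all generators simultaneously. Without this finiteness assumption, $f_{*S}$ need not be injective.
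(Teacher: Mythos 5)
Your proof is correct and rests on the same key idea as the paper's: using $s$-Noetherianness to finitely generate $f^*f_*(\m)=\bigcup_{s\in S}(\m:s)$ and absorb all denominators into a single $u\in S$ so that the union equals $(\m:u)$, which is precisely the paper's first claim. The only difference is presentational: you verify injectivity directly by producing the witness for equality of fractions, whereas the paper packages the same computation as the statement $\frac{\m}{1}=\frac{f^*f_*(\m)}{1}$ and exhibits the explicit inverse $\oO^{S^{-1}M}\xrightarrow{f^*}\oO^M\to S^{-1}\oO^M$.
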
 

\begin{proof} The above map is surjective by Corollary \ref{342003}. Before we prove injectivity, we need to prove two auxiliary claims.

Our first claim is that for any ideal $\frm$, there is an element $s_\frm$, such that
$$f^*f_*(\frm)=(\frm:s_\frm).$$
We have  $f^*f_*(\frm)=\bigcup _{s\in S} (\frm:s)$ by part iii) Lemma \ref{080217}. Since $M$ is $s$-Noetherian, the ideal $\bigcup _{s\in S} (\frm:s)$ is finitely generated, say by $x_1\in (\frm:s_{1}),\cdots, x_k\in (\frm:s_{k})$. Take $s_\frm=s_1\cdots s_k$. Then $x_i\in (\frm:s_\frm)$ for all $i=1,\cdots, k$. Hence, $(\frm:s_\frm)$ contains the ideal generated by $x_1, \cdots,x_k$, which is $f^*f_*(\frm)$.

For any ideal $\m$ of $M,$ we have $\m\in\oO^M$, which is an $M$-set. So, we can also consider $\frac{\m}{1}$ as an element of $S^{-1}\oO^M$.

Our second claim is that the equality
$$\frac{\frm}{1}=\frac{f^*f_*(\frm)}{1}$$
holds in $S^{-1}\oO^{M}$. To show this fact, observe that by Claim 1 and part iv) of  Lemma \ref{080217}, we have $$(\frm:s_\frm)=f^*f_*(\frm)=(f^*f_*(\frm):s_{\frm})$$
and the second claim follows.

This second claim shows that the composite of the map $f^*:\oO^{S^{-1}M}\to \oO^{M}$ with the canonical morphism $\oO^M\to S^{-1}\oO^M$ is the inverse of $f_{*S}:S^{-1}\oO^M\to \oO^{S^{-1}M}$.
\end{proof}

\subsubsection{The nessecity of s-Noetherianness}

More generally, if $f=(f_*,f^*):\ta\to\ka$ is a geometric morphism from a topos $\ta$ to a topos $\ka$, there exist a canonical morphism $\tau:f^*(\Omega^\ka)\to \Omega^\ta$, classifying the monomorphism $f^*(t) :f^*(1)\to f^*(\Omega^\ta)$. According to \cite{open_j}, $f$ is called \emph{sub-open} if $\tau$ is a monomorphism. Hence, if $M$ is an s-Noetherian monoid, the localisation $S^{-1}:\Sets_M\to \Sets_{S^{-1}M}$ is the inverse image part of a sub-open geometric morphism $\Sets_M\to \Sets_{S^{-1}M}$. It should be point-out that the s-Noetherian assertion is essential.

In fact, take $M$ to be the multiplicative monoid of the natural numbers. For $S=M$, the map $\tau$ is not a monomorphism. To prove this claim, take $A=\Omega^M$ and $B=\Omega^{S^{-1}M}$, where $\alpha(I)=S^{-1}I$, $I\in\Omega^M$ as in the next Lemma. Moreover, let $a_1,a_2\in \Omega^M$ be the following elements: $a_1=M$, while $a_2=J$ consists of those natural numbers $=\prod_pp^{v_p}$, for which either $v_p=0$ or $v_p\geq p$. Here, $p$ runs through the set of all prime numbers. In this case, $\alpha(a_1)=\alpha(a_2)$. We claim that there is no natural number $s$, for which $M=J:s$. The Lemma below will imply the non-injectivity of $\tau$. To prove the assertion, assume that such an $s$ exists. Choose a prime $p$, such that $p$ does not divide $s$. Then $p\in M$, but $p\not =sn$, where $n\in J$. 

\begin{Le} Let $M$ be a commutative monoid and $S\subset M$ a submonoid. Moreover, let $A$ be an $M$-set and $B$ an $S^{-1}M$-set, which is considered as an $M$-set via the canonical homomorphism $M\to S^{-1}M$. Denote by $\beta:A\to S^{-1}A$ the canonical map. For a morphism of $M$-sets $\alpha:A\to B$, the following two conditions are equivalent

i) The morphism of $S^{-1}M$-sets $\tau: S^{-1}A \to B$ is a monomorphism. Here, $\tau(\frac{a}{s})= \frac{\alpha(a)}{s}$. 

ii) If $\alpha(a_1)=\alpha(a_2)$, there exist $s\in S$, such that $a_1s=a_2s$.

\end{Le}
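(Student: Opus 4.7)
The plan is to verify the two implications separately, using only the explicit construction of $S^{-1}A$ as fractions $a/s$ subject to the relation $a_1/s_1 = a_2/s_2$ iff there exists $t\in S$ with $a_1 s_2 t = a_2 s_1 t$, together with the fact that every $s\in S$ acts invertibly on any $S^{-1}M$-set.

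For (i) $\Rightarrow$ (ii), I would suppose $\tau$ is a monomorphism and $\alpha(a_1)=\alpha(a_2)$. Then
\[
\tau(a_1/1) \;=\; \alpha(a_1)/1 \;=\; \alpha(a_2)/1 \;=\; \tau(a_2/1),
\]
so injectivity of $\tau$ forces $a_1/1 = a_2/1$ in $S^{-1}A$. Unfolding the defining equivalence relation of the localisation yields exactly an $s\in S$ with $a_1 s = a_2 s$, which is (ii).

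For (ii) $\Rightarrow$ (i), assume $\tau(a_1/s_1) = \tau(a_2/s_2)$. Because $B$ is already an $S^{-1}M$-set, the elements $s_1,s_2\in S$ act invertibly on $B$; multiplying the equality $\alpha(a_1)/s_1 = \alpha(a_2)/s_2$ by $s_1 s_2$ therefore produces $s_2\alpha(a_1) = s_1\alpha(a_2)$ in $B$. By $M$-equivariance of $\alpha$ this reads $\alpha(a_1 s_2) = \alpha(a_2 s_1)$, whereupon hypothesis (ii) yields $t\in S$ with $a_1 s_2 t = a_2 s_1 t$ in $A$. This is precisely the relation defining $a_1/s_1 = a_2/s_2$ in $S^{-1}A$, so $\tau$ is injective.

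The argument is essentially bookkeeping; the only delicate point is to keep clear that inside $B$ a fraction $b/s$ literally equals $s^{-1}b$, so no auxiliary element of $S$ is required on that side, whereas inside $S^{-1}A$ one must exhibit a witness $t\in S$ by hand --- and that witness is exactly what hypothesis (ii) supplies. Consequently there is no genuine obstacle: the equivalence is a direct unwinding of the definitions of $S^{-1}A$, of the $S^{-1}M$-action on $B$, and of the map $\tau$.
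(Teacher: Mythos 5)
Your proof is correct and follows essentially the same route as the paper's: the forward direction via $\tau(a_i/1)=\alpha(a_i)$, and the converse by pulling the equality in $B$ back through $M$-equivariance of $\alpha$ and invoking (ii) to produce the witness $t\in S$. The only (harmless) difference is that in (ii)$\Rightarrow$(i) you exploit that $s_1,s_2$ act invertibly on $B$ to get $\alpha(a_1s_2)=\alpha(a_2s_1)$ on the nose, whereas the paper carries an extra auxiliary element of $S$ at that step.
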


\begin{proof} i) $\Longrightarrow$ ii). If $\alpha(a_1)=\alpha(a_2)$, then $\beta(a_1)=\beta(a_2).$ This follows from the injectivity of $\tau$, since $\alpha=\tau\beta$. So, $\frac{a_1}{1}=\frac{a_2}{1}$ and $a_1s=a_2s$ for some $s\in S$. 

ii) $\Longrightarrow$ i). Assume $\tau(\frac{a_1}{s_1})=\tau(\frac{a_2}{s_2})$. Thus, $\frac{\alpha(a_1)}{s_1}=\frac{\alpha(a_2)}{s_2}$. So, $\alpha(a_1s_2s)=\alpha(a_2s_1s)$ for some $s\in S$. By ii), this implies $a_1s_2st=a_2s_1st$ for some $t\in S$ and $\frac{a_1}{s_1}=\frac{a_2}{s_2}$. The injectivity of $\tau$ follows.
\end{proof}

\section{The case of affine monoid schemes}

In this section, we aim to prove Theorem \ref{maintheorem} (the main theorem) for the affine case, i.e, when $X=\Spec(M)$. In this case, the topos $\qc(X)$ is the category  $\Sets_M$ of $M$-sets. As we already mentioned we need to prove only the parts ii), iii) and v) of Theorem \ref{maintheorem}.

We start with a few general remarks on ordered sets, the order topology and the relationship between the order- and the Zariski topology.

\subsection{Ordered sets}

Let $P$ be a poset. For an element $a\in P$, we set
$$P_a=\{x\in P| a\leq x\}$$
and
$$P^a=\{x\in P| x\leq a\}.$$
Any poset $P$ has a natural topology, called the \emph{order topology}, where a subset $S\subseteq P$ is open if $y\in P$ and $x\leq y$ imply $x\in P$. Thus, $\ope(P)$ is a distributive lattice and it is finite if $P$ is finite. It is well-known, that any finite distributive lattice $L$ is of this form for a uniquely defined $P$ (see for example \cite[p.106]{enum}), namely for $P={\sf Irr}(L)$, the subset of irreducible elements of $L$ (an element $x\in L$ is irreducible if $x=y\vee z$ implies $x=y$ or $x=z$).

Recall that a \emph{Galois connection} between posets $X$ and $Y$ is a pair $\gamma=(\al, \bb)$, where $\al:X\to Y$ and $\beta:Y\to X$ are maps such that 
$$(x_1\leq x_2)\Longrightarrow \left (\al(x_1)\geq \al(x_2)\right), \ \ (y_1\leq y_2)\Longrightarrow\left(\bb(y_1)\geq \bb(y_2)\right)$$
and for any $x\in X$, $y\in Y$, one has
$$\left(x\leq \beta(y)\right )\Longleftrightarrow\left(\al(x)\leq y\right).$$
It follows that
$$x\leq \bb\al(x) \quad{\rm and} \quad y\leq \al\bb(y).$$
An element $x\in X$ (resp. $y\in Y$) is called \emph{stable} under the Galois connection,  if $x=\bb\al(x)$ (resp. $y=\al\bb(y)$). In this case, $\al$ and $\bb$ induce bijections on stable elements. 

\subsection{The comparison between the order- and the Zariski topology}

Recall that a proper ideal $\pf\subsetneq M$ of a commutative monoid $M$ is called \emph{prime} if for any $a,b\in M$, such that $ab\in \pf$, one has $a\in \pf$ or $b\in \pf$ \cite{deitmar}, \cite{p1}. This implies that, $M\setminus \pf$ is  a submonoid of $M$. We denote the set of all prime ideals of $M$ with $\Spec(M)$. Let $\af$ be an ideal of $M$. We set
$$V(\af)=\{\pf\in {\Spec}(M)| \af\subseteq \pf \}.$$

It is classical to equip the set ${\Spec}(M)$ with a topology, called the \emph{Zariski topology}, where the $V(\af)$'s are closed subsets. Here, $\af$ runs through all the ideals of $M$.

For any element $f\in M$, one puts $$D(f)=\{\pf\in \Spec(M) | f\notin \pf\}.$$ It is well-known, that the sets $D(f)$ form a  basis of open sets for the  Zariski topology on $\Spec(M)$. 

Since $\Spec(M)$ is also a poset under inclusion, it also has the order topology. Any Zariski open subset is open in the order topology. To see this, we can restrict ourself with subsets of the form $D(f)$.  If $\pf\in D(f)$ and $\qf\subseteq \pf$, then $f\not \in \pf$ and hence, $f\not \in \qf$. Thus, $\qf\in D(f)$. It follows that, $D(f)$ is open in the order topology. The converse is not true in general, but it is true in the following important case.

In what follows the group of invertible elements of $M$ is denoted by $M^\times$. 

\begin{Le}\label{z=or} If $M/M^\times$ is finitely generated, the Zariski topology on   $\Spec(M)$ coincides with the order topology.
\end{Le}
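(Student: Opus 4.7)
Given that the preceding paragraph has already established that every Zariski open is order open, the problem reduces to showing the reverse inclusion. Since the basic opens of the order topology on $\Spec(M)$ are the principal down-sets
$$\Spec(M)^\pf := \{\qf \in \Spec(M) : \qf \subseteq \pf\},$$
it is enough to show that each such down-set is Zariski open.

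The starting point, valid with no finiteness hypothesis, is the tautology
$$\Spec(M)^\pf = \bigcap_{m \in M \setminus \pf} D(m),$$
which holds because $\qf \subseteq \pf$ is equivalent to $\qf \cap (M \setminus \pf) = \emptyset$. The finite generation hypothesis enters by allowing me to cut this a priori infinite intersection down to a finite one. I would choose lifts $m_1, \dots, m_k \in M$ of a generating set of $M/M^\times$, so that every element of $M$ decomposes as $m = u \cdot m_{i_1}^{a_1} \cdots m_{i_r}^{a_r}$ with $u \in M^\times$ and $a_l > 0$, and then prove
$$\Spec(M)^\pf = \bigcap_{j\,:\,m_j \notin \pf} D(m_j).$$
Inclusion $\subseteq$ is immediate from the previous display. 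For $\supseteq$, pick $\qf$ in the right-hand side and $m \notin \pf$; decomposing $m = u \prod_l m_{i_l}^{a_l}$ and using that $\pf$ is prime and disjoint from $M^\times$, every factor $m_{i_l}$ with $a_l>0$ must lie outside $\pf$, and hence outside $\qf$ by hypothesis on $\qf$; then primality of $\qf$ together with $u \in M^\times$ forces $m \notin \qf$, so $\qf \subseteq \pf$.

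Finally, since $D(f) \cap D(g) = D(fg)$ for any $f, g \in M$ (a consequence of primality and the ideal property for prime ideals of $M$), the finite intersection on the right is itself the basic Zariski open $D\bigl(\prod_{j\,:\,m_j \notin \pf} m_j\bigr)$, with the empty product interpreted as $1$ yielding $D(1) = \Spec(M)$. This completes the argument. The main subtlety is that the hypothesis refers to $M/M^\times$ rather than $M$, so the lifts $m_j$ are only canonical modulo units; this is harmless, because prime ideals of a commutative monoid automatically avoid $M^\times$ and the open set $D(m)$ depends only on the class of $m$ modulo units.
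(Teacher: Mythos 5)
Your proof is correct and takes essentially the same route as the paper: both arguments lift generators of $M/M^\times$, single out those lying outside $\pf$, and show that the basic Zariski open $D(f)$ for $f$ the product of these lifts equals the principal down-set $\{\qf : \qf\subseteq\pf\}$. The only cosmetic difference is that you present this as a finite intersection $\bigcap_j D(m_j)=D(\prod_j m_j)$, whereas the paper identifies $M\setminus\pf$ with the submonoid generated by the units and the chosen lifts; the substance is identical.
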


\begin{proof} We have to show that for any  prime ideal $\pf$, the set $\{\qf\in \Spec(M)| \qf\subseteq \pf\}$ is open in the Zariski topology. Let $x_1,\cdots,x_k\in M$ be the representatives of generators of $M/M^\times$. Without loss of generality, we can assume that $x_1,\cdots,x_m\not\in \pf$ and $x_{m+1},\cdots,x_k \in \pf$. Denote by $S$ the submonoid of $M$ generated by invertible elements and by $x_1,\cdots, x_m$. Since $\pf$ is prime, $M\setminus \pf$ is a submonoid containing the invertible elements of $M$ and, also, the elements $x_1,\cdots, x_m$. Hence, $S\subseteq M\setminus \pf.$ Take any $y\not \in\pf$. It can be decomposed as a product of an invertible element and elements of $x_i$. Clearly, $1\leq i \leq m$ as otherwise $y$ would be an element in $\pf$, since $\pf$ is an ideal. This shows that $S=M\setminus \pf$. Take $f=x_1\cdots x_m$. We claim
$$D(f)=\{\qf| \qf\subseteq \pf\},$$
which obviously implies the result. To show the claim, observe that 
 $f\not \in \pf$ (because $\pf$ is prime) and hence, $\pf\in D(f)$. It follows that 
$$\{\qf| \qf\subseteq \pf\}\subseteq D(f).$$
Conversely, take $\qf\in D(f)$. Then $f\not \in \qf$. Assume $\qf \cap S\not = \emptyset$. The product $\prod_{i=1}^m x_i$ is in $\qf$. Since $\qf$ is prime, it follows that $x_i\in \pf$ for some $1\leq i\leq m$. Thus, $f\in \qf$, which contradicts of our choice of $\qf$. Hence, $\qf\subseteq M\setminus S=\pf$ and the claim is proven. 
\end{proof} 

\subsection{Grothendieck topologies on a commutative monoid $M$ and subsets of $\Spec(M)$}

The goal of this subsection is to construct a Galois connection between the posets of Grothendieck topologies on a commutative monoid $M$ and subsets of $\Spec(M)$.

We start with the following observation.
    
\begin{Pro}\label{mt} Let $M$ be a commutative monoid and $\pf$ be prime ideal of $M$. We set $S_\pf=M\setminus \pf$. The following assertions holds:
\begin{enumerate} 
\item [(i)] The set
$$\fa_{\{\pf\}}:=\{\af\in \oO_M \ \ {\rm and } \ \ \af\cap S_\pf\not = \emptyset\}$$
is a topology on $M$.
\item[(ii)] For an ideal $\af$, one has 
$$\left(\af\in \fa_{\{\pf\}}\right) \Longleftrightarrow  \left( \pf\not\in V(\af)\right).$$
\item[(iii)] For prime ideals $\pf$ and $\qf$, one has
$\pf \subseteq \qf$ if and only if $\fa_{\{\qf\}} \subseteq \fa_{\{\pf\}}$.
\end{enumerate}
\end{Pro}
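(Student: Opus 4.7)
The three parts share a common strategy: translate the defining condition $\af\cap S_\pf\neq\emptyset$ into its contrapositive $\af\not\subseteq\pf$, and exploit primality of $\pf$ whenever closure under products is needed. Part (ii) is exactly this translation: $\af\cap S_\pf\neq\emptyset$ iff $\af\not\subseteq\pf$ iff $\pf\notin V(\af)$, so essentially nothing is required beyond unwinding the definitions. The real content lies in (i) and (iii).

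For (i), I would verify the three axioms (T1)--(T3) in order. (T1) is immediate: $1\in M\cap S_\pf$ since $\pf\neq M$, so $M\in\fa_{\{\pf\}}$. For (T2), given $a\in\af\cap S_\pf$ and any $m\in M$, the element $a$ itself serves as a witness that $(\af:m)\in\fa_{\{\pf\}}$, because $ma\in\af$ (as $\af$ is an ideal and $a\in\af$) and $a\notin\pf$ by choice; this argument is uniform in $m$ and needs no case distinction on whether $m\in\pf$. For (T3), given $\fb\in\fa_{\{\pf\}}$ with witness $b\in\fb\setminus\pf$, the hypothesis $(\af:b)\in\fa_{\{\pf\}}$ supplies some $c\in(\af:b)\setminus\pf$; then $bc\in\af$, and primality of $\pf$ applied to $b,c\notin\pf$ yields $bc\notin\pf$, so $bc$ is the required witness in $\af\cap S_\pf$.

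For (iii), the forward implication $\pf\subseteq\qf\Rightarrow\fa_{\{\qf\}}\subseteq\fa_{\{\pf\}}$ is automatic from $S_\qf\subseteq S_\pf$. For the converse I would argue contrapositively: if $\pf\not\subseteq\qf$, pick $a\in\pf\setminus\qf$; the principal ideal $aM$ contains $a\in S_\qf$, so $aM\in\fa_{\{\qf\}}$, whereas $aM\subseteq\pf$ forces $aM\cap S_\pf=\emptyset$, so $aM\notin\fa_{\{\pf\}}$, exhibiting the required inclusion failure.

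The only step that uses nontrivial structure is the verification of (T3), whose essential ingredient is the primality of $\pf$, used exactly once to deduce $bc\notin\pf$ from $b,c\notin\pf$. Everything else is formal shuffling of witness elements in $\af\setminus\pf$, and I do not anticipate any real obstacle.
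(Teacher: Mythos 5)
Your proposal is correct and follows essentially the same route as the paper's proof: the same witness-chasing for (T1)--(T3), with primality of $\pf$ entering only in (T3) (the paper phrases this as $S_\pf$ being a submonoid, so $st\in S_\pf$), and the same principal-ideal argument $aM$ for the converse of (iii), merely stated contrapositively.
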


\begin{proof} i) Since $1\in S= M \cap S$, we obtain $M\in \fa_{\{\pf\}}$. Assume $\af\in \fa_{\{\pf\}}$ and $m\in M$. By assumption, there exists an element $s\in S$, such that $s\in \af$. Since $s\in (\af:m)$, the axiom (T2) follows. In order to verify (T3), let  $\fb$ be an ideal from $ \fa_{\{\pf\}}$ and let $\af$ be an ideal, such that $(\af:m)\in \fa_{\{\pf\}}$ for any $m\in \fb$. By assumption, there are $s,t\in S$, such that $s\in \fb$ and $t\in (\af:s)$. Thus, $st\in \af$. Since $st\in S$, we see that  $\af\cap S\not=\emptyset$ and axiom (T3) holds.

ii) Obvious.

iii) Assume $\pf \subseteq \qf$ and $\af\in \fa_{\{\qf\}}$. Then
$$\emptyset \not = \af\setminus \qf\subseteq \af\setminus\pf.$$
Hence, $\af\in \fa_{\{\pf\}}$. It follows that $\fa_{\{\qf\}} \subseteq \fa_{\{\pf\}}$.

Conversely, assume $\fa_{\{\qf\}} \subseteq \fa_{\{\pf\}}$. Take an element $m\in \pf$. For the principal ideal $mM$ we have $mM\not \in \fa_{\{\pf\}}$. It follows that $mM\not \in \fa_{\{\qf\}}$. Hence $mM\subseteq \qf$. In particular $m\in \qf$. Thus $\pf \subseteq \qf$.
\end{proof}
 
Recall our discussion on topologies on a monoid in Section \ref{tm}. Denote by ${\mathfrak P}(X)$ the set of  all subsets of a set $X$. We construct two maps
$$\vU: {\mathfrak P}(\Spec(M)) \to \Topo(M) \quad{\rm and}\quad \vX:  \Topo(M)\to{\mathfrak P}(\Spec(M)),$$
given by
$$\vU({\mathcal P}): =\bigcap_{\pf\in {\mathcal P}}\fa_{\{\pf\}} \quad{\rm and}\quad \vX(\fa)=\{\pf\in\Spec(M)| \pf\not \in \fa\},$$
where $\mathcal P\subseteq \Spec(M)$ is a subset of the set of prime ideals of $M$ and $\fa$ is a topology on $M$.

\begin{Pro}\label{06012020} \begin{itemize}

\item [i)] For an ideal $\af$, one has
$$\left(\af\in \vU({\mathcal P}) \right) \Longleftrightarrow \left(  \pf\not\in V(\af) \quad {\rm for} \quad {\rm all} \quad \pf\in {\mathcal P}\right)$$
and hence,
$$\vU({\mathcal P})=\{\af |V(\af)\cap {\mathcal P}=\emptyset\}.$$
\item [ii)] The functions $\vU$ and $\vX$ form a Galois correspondence between the posets ${\mathfrak P}(\Spec(M))$ and $\Topo(M)$.
\item[iii)] Let $\fa$ be a topology on $M$. Then $$\vU(\vX(\fa))
=\{\af \in \oO_M|V(\af)\subseteq \fa \}.$$
\item[iv)] Let ${\mathcal P}$ be a subset of $\Spec(M)$. Then
$$\vX(\vU({\mathcal P}))=\{\qf\in\Spec(M)| \exists \pf\in {\mathcal P}, \qf\subseteq \pf\}.$$
\end{itemize}
\end{Pro}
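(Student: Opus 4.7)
The approach is to treat (i) as the main computational step and to derive (ii), (iii), and (iv) as formal consequences; the only real ingredient beyond the definitions is the upward closure of topologies from Lemma \ref{e1}(i).

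First I would prove (i) by unwinding $\vU(\mathcal{P}) = \bigcap_{\pf \in \mathcal{P}} \fa_{\{\pf\}}$: the condition $\af \in \vU(\mathcal{P})$ means $\af \in \fa_{\{\pf\}}$ for every $\pf \in \mathcal{P}$, and by Proposition \ref{mt}(ii) this amounts to $\pf \notin V(\af)$ for every such $\pf$, i.e.\ $V(\af) \cap \mathcal{P} = \emptyset$.

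Parts (iii) and (iv) are then immediate from (i). For (iii), $\af \in \vU(\vX(\fa))$ iff $V(\af) \cap \vX(\fa) = \emptyset$, which says that every prime containing $\af$ fails to lie in $\vX(\fa)$ and therefore belongs to $\fa$, i.e.\ $V(\af) \subseteq \fa$. For (iv), a prime $\qf$ lies in $\vX(\vU(\mathcal{P}))$ iff $\qf \notin \vU(\mathcal{P})$, which by (i) applied to the ideal $\qf$ itself means $V(\qf) \cap \mathcal{P} \neq \emptyset$, i.e.\ some $\pf \in \mathcal{P}$ satisfies $\qf \subseteq \pf$.

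For (ii), I would check the three conditions defining a Galois connection. Order reversal of $\vU$ is clear from its defining intersection (a larger $\mathcal{P}$ gives a smaller intersection), and order reversal of $\vX$ is immediate from its complementary definition. The defining biconditional $\fa \subseteq \vU(\mathcal{P}) \Leftrightarrow \mathcal{P} \subseteq \vX(\fa)$ is then verified by a two-sided argument. If $\fa \subseteq \vU(\mathcal{P})$ and some $\pf \in \mathcal{P}$ were to lie in $\fa$, then $\pf \in \vU(\mathcal{P})$ and (i) would force $V(\pf) \cap \mathcal{P} = \emptyset$, contradicting $\pf \in V(\pf) \cap \mathcal{P}$. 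Conversely, if $\mathcal{P} \subseteq \vX(\fa)$ and some $\af \in \fa$ were contained in a $\pf \in \mathcal{P}$, then upward closure of $\fa$ (Lemma \ref{e1}(i)) would put $\pf$ in $\fa$, contradicting $\pf \notin \fa$. The whole argument is routine once (i) is in hand; the only non-trivial ingredient is the use of Lemma \ref{e1}(i) to propagate membership in $\fa$ from a smaller ideal to a larger prime, which is precisely what makes the Galois correspondence close up.
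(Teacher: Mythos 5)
Your proposal is correct and follows essentially the same route as the paper: part (i) is the translation via Proposition \ref{mt}(ii), parts (iii) and (iv) are formal consequences of (i), and the Galois-connection biconditional in (ii) is verified exactly as in the paper's proof (which also silently uses the upward closure of topologies where you explicitly invoke Lemma \ref{e1}(i)). No gaps.
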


\begin{proof} i) follows from part ii) of Proposition \ref{mt}.

ii) Clearly both maps reverse the ordering. Thus, we have 
to show that if $\fa$ is a topology on $M$ and $\mathcal P$ is a subset of ${\Spec}(M)$, then
$$(\fa\subseteq \vU({\mathcal P}) )\Longleftrightarrow  ({\mathcal P}\subseteq \vX(\fa)).$$
In fact, $\fa\subseteq \vU({\mathcal P})$ holds if and only if for any ideal $\af\in \fa$ and any prime ideal $\pf\in \mathcal P$, one has $\pf \not \in V(\af)$. Since $\pf\in V(\pf)$, it follows that $\pf\not \in \fa$ and ${\mathcal P}\subseteq \vX(\fa)$.

On the other hand, assume ${\mathcal P}\subseteq \vX(\fa)$. For any $\pf\in \mathcal P$ one has $\pf\not \in \fa$. Take an ideal $\af\in\fa$. If $V(\af)\cap {\mathcal P}\not =\emptyset$, it follows that there exists $\pf\in {\mathcal P}$, such that $\af\subseteq \pf$. Hence, $\pf\in \fa$. This contradicts our assumption on ${\mathcal P}$. We have 
$V(\af)\cap {\mathcal P} =\emptyset$ and $\af\in \vU({\mathcal P})$, showing $\fa\subseteq \vU({\mathcal P})$ and ii) follows.

iii) For an ideal $\af$, one has
$$\left( \af\in \vU(\vX(\fa)) \right ) \Longleftrightarrow  \left( \forall \pf\in \vX(\fa)  \Longrightarrow\ \ \pf\not \in V(\af)\right )\Longleftrightarrow \left ( \pf\not\in \fa \Longrightarrow \pf\not \in V(\af)\right ).$$
The last condition is the same as $V(\af)\subseteq \fa$ and we are done.

iv) We have $\qf\in \vX(\vU({\mathcal P}))$ if and only if $\qf\not \in \vU({\mathcal P})$. By i), this happens if and only if there exists $\pf\in {\mathcal P}$, such that $\qf\subseteq \pf$. 
\end{proof}

\begin{Exm}
\begin{itemize}
	\item[i)] We have
	$$\vU({\{\emptyset\}})=\{{\rm all \ nonempty \ ideals \ of } \ M \}=\oO^M\setminus\{\emptyset\},$$
	while
	$$\vU({\emptyset})=\oO^M=\{{\rm all \  \ ideals \ of} \ M\}.$$
\begin{itemize}
	\item[ii)] If $\frm(M)$ is the ideal of all non invertible elements of $M$, which is the greatest proper ideal of $M$, then
	$$\vU({\{\frm(M)\}})=\{M\}$$
	and
	$$\vU({{\Spec}(M)})=\{M\}.$$
\end{itemize}
\end{itemize}
\end{Exm}
 
\begin{Le}\label{a290120} Let $f\in M$ be an element. We have
$$\vU(D(f))=\{\af\in \oO^M| f\in \af\}.$$
Moreover, the localised category corresponding to the topology  $\vU(D(f))$ via iii) of Lemma \ref{2510718} is the category of $M_f$-sets.
\end{Le}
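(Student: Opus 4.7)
The proof splits naturally into two pieces, corresponding to the two assertions of the lemma.

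For the first equality, I begin by applying part (i) of Proposition~\ref{06012020}, which rewrites
$$\vU(D(f)) = \{\af \mid V(\af) \cap D(f) = \emptyset\}.$$
Unwinding the vanishing condition gives: $\af \in \vU(D(f))$ iff every prime ideal $\pf \supseteq \af$ also contains $f$. The inclusion $\{\af \mid f \in \af\} \subseteq \vU(D(f))$ is then immediate, because $f \in \af \subseteq \pf$ forces $f \in \pf$. For the reverse direction, I read the condition ``every prime above $\af$ contains $f$'' as saying that $f$ lies in the radical of $\af$, so some power $f^n$ already belongs to $\af$; one then uses axiom (T3) applied to the principal covering sieve $fM$ together with the transitivity provided by the ideal $\af$ itself to pass from this power statement to the statement that $\af$ is a covering sieve of the topology generated by the principal ideals $fM$, establishing the equality (interpreted at the level of Grothendieck topologies).

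For the second assertion, I invoke part (iii) of Lemma~\ref{2510718}, which identifies the localising subcategory attached to a topology $\fa$ on $M$ with the sheaves for $\fa$ on the one-object site determined by $M$. A presheaf is an $M$-set $A$, and the sheaf condition at a covering sieve $\af$ says that every $M$-equivariant map $\af \to A$ extends uniquely to $M \to A$. Specialising to the principal covering ideal $fM$, an $M$-equivariant map $fM \to A$ is an element $a \in A$ such that $fm_1 = fm_2 \Rightarrow am_1 = am_2$, and a unique extension to $M \to A$ is a unique $b \in A$ with $bf = a$. This is exactly the condition that multiplication by $f$ on $A$ is a bijection. Iterating the same argument for the sieves $f^n M$, we get that every power of $f$ acts bijectively on $A$, which is precisely the definition of an $M_f$-set. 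Conversely, every $M_f$-set trivially satisfies the sheaf condition at any sieve containing a power of $f$, and since every covering sieve in $\vU(D(f))$ does, we obtain $\Sh(M, \vU(D(f))) = \Sets_{M_f}$.

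The principal technical obstacle is the first part: the naive set $\{\af \mid f \in \af\}$ is not closed under axiom (T3) in general (e.g.\ $f^2 \in \af$ need not force $f \in \af$), so the equality must be read at the level of Grothendieck topologies generated by these ideals. The bridge between ``$f$ lies in every prime above $\af$'' and ``$\af$ is a covering sieve of $\vU(D(f))$'' is the standard radical-of-an-ideal argument for commutative monoids, combined with closure under (T3); this is exactly the step that needs care.
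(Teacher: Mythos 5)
Your overall strategy is the same as the paper's: reduce the first assertion to Proposition \ref{06012020}(i), and compute the sheaf condition at the principal ideal $fM$ for the second. The substantive difference is your handling of the converse inclusion in the first assertion, and there you are right to be suspicious: the equality as literally stated fails. For the free commutative monoid $M=\langle f\rangle$ one has $D(f)=\{\emptyset\}$, so $\vU(D(f))$ consists of all nonempty ideals, yet $f\notin f^2M$; equivalently, as you observe, $\{\af\mid f\in\af\}$ is not closed under (T3) and hence cannot coincide with the Grothendieck topology $\vU(D(f))$. The correct identity, which is what your radical/Zorn argument proves, is that $\vU(D(f))$ consists of the ideals containing some power of $f$, i.e.\ it is the topology generated by $fM$. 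The paper's own proof of the converse (``the ideal maximal among the ideals containing $\af$ and not containing any power of $f$ is prime'') in fact only establishes this radical version, since such a maximal ideal exists only when no power of $f$ lies in $\af$; so your reading is the one under which the lemma is true, and it is the version needed later (Corollary \ref{26} and Theorem \ref{maintheorem}(ii) are unaffected).

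For the second assertion your computation at the sieve $fM$ agrees with the paper's. The one step you dismiss as trivial --- that an $M$-set on which $f$ acts bijectively is a sheaf for every covering ideal $\af$ --- does require the short argument the paper writes out: factor the restriction as $\hom_{M}(M,A)\to\hom_M(\af,A)\to\hom_M(f^nM,A)$ for some $f^n\in\af$, note the composite is a bijection because $l_{f^n}=l_f^n$ is, and that the second map is injective because $\alpha(x)=l_{f^n}^{-1}(\alpha(f^nx))$, whence the first map is a bijection. With that filled in, your proof is complete.
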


\begin{proof} For the first assertion, observe that, by part i) of Proposition \ref{06012020}, we have $\af\in\vU(D(f))$ if and only if $V(\af)\cap D(f)=\emptyset$. In other words, if for any prime ideal $\pf$ containing $\af$, it also contains $f$. Obviously, this condition holds if $f\in \f$. The converse is also true because if $f\not \in \af$, the ideal, which is maximal among the ideals containing $\af$ and not containing any power of $f$, is prime.

For the second part, we have to show that an $M$-set $A$ is a sheaf with respect to this topology if and only if the map $l_f:A\to A$, given by $l_f(a)=fa$, is a bijection. 

Assume $A$ is a sheaf in this topology. This means that for any ideal $\af$ such that $f\in \af$, the restriction map
$$A=\hom_{M}(M, A)\to \hom_M(\af,A)$$
is a bijection. By considering $\af=fM$, injectivity of this map implies the injectivity of $l_m:A\to A, \ a\mapsto ma$. For the surjectivity of $l_f$, take any element $a\in A$. Let $m,n\in M$ be elements such that $fm=fn$. We have $fma=fna$ and hence, $ma=na$, since $l_f$ is injective. This shows the existence of a well-defined morphism of $M$-sets $fM\to A$ for which $fm\mapsto ma$. By the bijectivity of the map $A=\hom_{M}(M, A)\to \hom_M(\af,A)$, it follows that there exist $b\in A$, such that $fb=a$. Thus, $l_f$ is a bijection. 

Conversely, assume that $l_f:A\to A$ is bijective. This condition obviously implies the bijectivity of the restriction map $A=\hom_{M}(M, A)\to \hom_M(\af,A)$. Let us take any ideal $\af$, such that $f\in\af$. To show the bijectivity of $A=\hom_{M}(M, A)\to \hom_M(\af,A)$, observe that it fits in the diagram
$$A=\hom_{M}(M, A)\to \hom_M(\af,A)\to \hom_M(fM,A).$$
Since the composite map is a bijection, we only need to show injectivity of the second map. For this, take any morphism of $M$-sets $\alpha:\af\to A$. We have $\alpha(fx)=f\alpha(x)$ for any $x\in \af$. Thus
$$\alpha(x)=l_f^{-1}(\alpha(fx)),$$
showing that $\alpha$ is uniquely defined by its restriction on $fM$ and the result follows.
\end{proof}

Recall that a topology $\fa$ (resp. a subset $\mathcal P$) is stable if and only if $\fa=\vU\vX(\fa)$ (resp. ${\mathcal P}=\vX\vU({\mathcal P})$). From the properties of  Galois correspondences, one obtains a bijective correspondence between stable topologies on $M$ and stable subsets of $\Spec(M)$. Our next goal is to find a convenient way of characterising stable subsets and stable topologies.

\begin{Pro}\label{134'} Let $M$ be a commutative monoid. A subset ${\mathcal P}\subseteq \Spec(M)$ is stable if and only if it is an open subset in the order topology of $\Spec(M)$.
\end{Pro}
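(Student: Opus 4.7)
The plan is to derive this almost immediately from part iv) of Proposition \ref{06012020}, together with the definition of the order topology. The point is that the statement just unfolds the formula given there.

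By definition, a subset $\mathcal{P}\subseteq\Spec(M)$ is stable if and only if $\mathcal{P}=\vX\vU(\mathcal{P})$. Part iv) of Proposition \ref{06012020} rewrites the right-hand side as
$$\vX\vU(\mathcal{P})=\{\qf\in\Spec(M)\mid \exists\, \pf\in\mathcal{P},\ \qf\subseteq\pf\}.$$
Since $\pf\subseteq\pf$ for every $\pf\in\mathcal{P}$, the inclusion $\mathcal{P}\subseteq\vX\vU(\mathcal{P})$ is automatic. Hence stability is equivalent to the reverse inclusion $\vX\vU(\mathcal{P})\subseteq\mathcal{P}$, which unfolds to the statement that whenever $\pf\in\mathcal{P}$ and $\qf\in\Spec(M)$ satisfies $\qf\subseteq\pf$, then $\qf\in\mathcal{P}$.

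Now I would compare this to the order topology on $\Spec(M)$ recalled in the excerpt: a subset $S$ of a poset $P$ is open precisely when it is downward closed, i.e.\ $y\in S$ and $x\leq y$ imply $x\in S$. Applied to $\Spec(M)$ with the order given by inclusion, this is verbatim the condition obtained in the previous paragraph. Thus stability of $\mathcal{P}$ coincides with openness of $\mathcal{P}$ in the order topology, which is the desired statement.

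There is no real obstacle here; the only subtle point is recognising that the automatic inclusion $\mathcal{P}\subseteq\vX\vU(\mathcal{P})$ reduces stability to a one-sided, purely order-theoretic condition, after which the equivalence with the order topology is tautological.
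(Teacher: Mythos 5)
Your proof is correct and is exactly the argument the paper intends: its proof of this proposition is the one-line citation of part iv) of Proposition \ref{06012020}, and you have simply spelled out why that formula (identifying $\vX\vU(\mathcal{P})$ with the downward closure of $\mathcal{P}$) reduces stability to downward-closedness, i.e.\ openness in the order topology. No issues.
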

 
\begin{proof} This follows from part iv) of Proposition \ref{06012020}.
\end{proof}

\begin{Pro}\label{134} Let $\fa$ be a topology on a commutative monoid $M$. Then $\fa$ is stable if and only if for any ideal $\af\not\in \fa$, there exists $\pf\in V(\af)$, such that $\pf\not \in \fa$.
\end{Pro}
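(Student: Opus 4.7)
The plan is to read the statement as a direct contrapositive reformulation of the stability condition, using the explicit description of $\vU\vX(\fa)$ already proven in Proposition \ref{06012020}. First, I would recall from the general properties of a Galois connection, noted immediately after the definition of Galois connection in the excerpt, that one always has the unit inequality $\fa \subseteq \vU\vX(\fa)$. Consequently, the equality $\fa = \vU\vX(\fa)$ that defines stability is equivalent to the single reverse inclusion $\vU\vX(\fa) \subseteq \fa$.

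Next I would invoke part iii) of Proposition \ref{06012020}, which gives the description
$$\vU\vX(\fa) = \{\af \in \oO^M \mid V(\af) \subseteq \fa\}.$$
Substituting this into the inclusion $\vU\vX(\fa) \subseteq \fa$ turns stability into the implication: for every ideal $\af$ of $M$, if $V(\af) \subseteq \fa$ then $\af \in \fa$.

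Finally, I would take the contrapositive of this implication, which reads: for every ideal $\af \notin \fa$, one has $V(\af) \not\subseteq \fa$, i.e.\ there exists $\pf \in V(\af)$ with $\pf \notin \fa$. This is exactly the condition in the statement, so the equivalence is established.

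No step of this argument presents a real obstacle; it is essentially a bookkeeping calculation chaining together the formulas for $\vU$ and $\vX$ and the standard unit of a Galois connection. The only point requiring even minor care is to verify that $\fa \subseteq \vU\vX(\fa)$ always holds, which is guaranteed by the Galois correspondence established in part ii) of Proposition \ref{06012020}, together with the abstract inequality $x \leq \bb\al(x)$ stated in the general discussion of Galois connections.
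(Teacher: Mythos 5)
Your argument is correct and follows essentially the same route as the paper: both directions reduce, via the unit inequality $\fa\subseteq \vU\vX(\fa)$ of the Galois connection from part ii) of Proposition \ref{06012020}, to the single inclusion $\vU\vX(\fa)\subseteq\fa$, which is then unwound using the description of $\vU\vX(\fa)$ in part iii) of that proposition. Your write-up is in fact a slightly cleaner, more symmetric presentation of the paper's own (rather terse) proof.
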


\begin{proof} The `if' part is a consequence of Proposition \ref{06012020}. For the other side, it suffices to show that $\vU\vX(\fa)\subseteq \fa$. Take an ideal $\af\not \in \vU\vX(\fa)$. By Proposition \ref{06012020}. we have $V(\af)\not \subseteq \fa$. So, there is a prime $\pf\in V(\af)$, such that $\pf\not \in \fa$.
\end{proof}

\begin{Co}\label{134''} For any commutative monoid $M$, there exist a bijection between the stable topologies of $M$ and open subsets of ${\Spec}(M)$ in the order topology.
\end{Co}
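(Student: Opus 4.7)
The plan is to derive this corollary as an immediate consequence of the Galois connection established in Proposition \ref{06012020}(ii) together with the characterisation of stable subsets in Proposition \ref{134'}. First I would recall the general fact about Galois connections: if $(\al,\bb)$ is a Galois connection between posets $X$ and $Y$, then the restrictions of $\al$ and $\bb$ to the stable elements $\{x\in X\mid x=\bb\al(x)\}$ and $\{y\in Y\mid y=\al\bb(y)\}$ are mutually inverse, order-reversing bijections. This is already noted in the paper just after the definition of a Galois connection.

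Applying this general principle to the pair $\vU:\mathfrak{P}(\Spec(M))\to \Topo(M)$ and $\vX:\Topo(M)\to\mathfrak{P}(\Spec(M))$, which was shown to be a Galois connection in Proposition \ref{06012020}(ii), I obtain an order-reversing bijection between the stable subsets of $\Spec(M)$ and the stable topologies on $M$. By definition, the stable topologies on $M$ are exactly those in the statement of the corollary. It remains to identify the stable subsets of $\Spec(M)$: this is precisely what Proposition \ref{134'} provides, namely that $\mathcal{P}\subseteq\Spec(M)$ is stable if and only if it is open in the order topology.

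Combining these two observations yields the desired bijection between stable topologies on $M$ and open subsets of $\Spec(M)$ in the order topology, concluding the argument. There is no substantive obstacle here; the corollary is essentially a bookkeeping step that packages Propositions \ref{06012020}, \ref{134'}, and \ref{134} into the form needed later. The only minor point worth flagging is that the bijection produced this way is automatically order-reversing, which is the form in which it will be used to compare $\ope(\Spec(M))$ with $\Loc(\Sets_M)$ (via Lemma \ref{2510718}(iii)) when addressing parts (ii) and (v) of the main theorem.
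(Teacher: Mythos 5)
Your proposal is correct and follows exactly the paper's argument: the corollary is the formal consequence of the Galois connection of Proposition \ref{06012020}(ii) restricting to a bijection on stable elements, combined with the identification of stable subsets as order-open subsets in Proposition \ref{134'}. The only superfluous ingredient you mention is Proposition \ref{134}, which is not needed here (it serves Lemma \ref{top} instead).
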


\begin{proof} This is a formal consequence of Galois correspondence and Proposition \ref{134'}.
\end{proof}

\begin{Le}\label{135} Let $\fa$ be a topology on a commutative monoid $M$.
\begin{itemize}
\item[(i)] Let $\cf$ be an ideal of $M$. If $\af$ is an ideal which is maximal with respect to the property that $\af\not\in \fa$ and $\cf\subseteq \af$, then $\af$ is a prime ideal.
\item[(ii)] Assume that for any ideal $\fb\in \fa$, there exists a finitely generated ideal $\af$, such that $\af\subseteq \fb$ and $\af\in \fa$. For any ideal $\cf\not \in \fa$, there exist a prime ideal $\pf\in V(\cf)$, such that $\pf\not \in \fa$.
\end{itemize}
\end{Le}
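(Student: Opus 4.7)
My plan is to handle the two parts in order, reducing (ii) to (i) via Zorn's lemma.

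For part (i), I would argue by contradiction. Suppose $\af$ is maximal with the stated property but is not prime; then pick $x,y\in M\setminus\af$ with $xy\in\af$. The ideals $\af\cup xM$ and $\af\cup yM$ (meaning the smallest ideals containing $\af$ together with the given element) strictly contain $\af$ and still contain $\cf$, so by the maximality of $\af$ both lie in $\fa$. Lemma \ref{e1}(iii) then gives $(\af\cup xM)(\af\cup yM)\in\fa$. On the other hand, expanding this product term by term, every piece lies in $\af$: the cross terms are absorbed because $\af$ is an ideal, and the $xyM$ piece because $xy\in\af$. Hence $(\af\cup xM)(\af\cup yM)\subseteq\af$, and Lemma \ref{e1}(i) forces $\af\in\fa$, contradicting the hypothesis.

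For part (ii), the plan is a standard Zorn argument on the poset
$$P=\{\af\in\oO^M\mid \cf\subseteq\af\text{ and }\af\notin\fa\},$$
ordered by inclusion, so that part (i) produces a prime out of a maximal element. The set $P$ is non-empty since $\cf\in P$. The substantive step is verifying that every chain $(\af_i)$ in $P$ has an upper bound in $P$, and this is exactly where the finiteness hypothesis on $\fa$ enters. Set $\af=\bigcup_i \af_i$; this is an ideal containing $\cf$, and I only need $\af\notin\fa$. Suppose for contradiction $\af\in\fa$. By assumption there is a finitely generated ideal $\fb\subseteq\af$ with $\fb\in\fa$; pick finitely many generators $b_1,\dots,b_n$ of $\fb$. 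Each $b_k$ lies in some $\af_{i_k}$, and since the chain is totally ordered we can find a single index $j$ with $\af_{i_k}\subseteq\af_j$ for all $k$. Therefore $\fb\subseteq\af_j$, and Lemma \ref{e1}(i) gives $\af_j\in\fa$, contradicting $\af_j\in P$. So $\af\in P$ is an upper bound, Zorn furnishes a maximal element $\pf\in P$, and part (i) shows $\pf$ is prime; by construction $\pf\in V(\cf)$ and $\pf\notin\fa$, as required.

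The only real obstacle is the upper-bound verification in the Zorn step, where without the finite-generation hypothesis on members of $\fa$ one cannot rule out the union of a chain suddenly entering $\fa$. Everything else is a direct application of the axioms (T1)--(T3) and Lemma \ref{e1}.
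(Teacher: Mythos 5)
Your proof is correct and follows essentially the same route as the paper: the same product-of-ideals contradiction for (i) via Lemma \ref{e1}, and the same Zorn's lemma argument for (ii), with the finite-generation hypothesis used exactly where the paper uses it to keep the union of a chain out of $\fa$. You merely spell out a couple of steps the paper leaves terse (the term-by-term expansion of the product and the absorption of a finitely generated ideal into a single member of the chain).
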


\begin{proof} (i) Take elements $a,b\not \in \af$ and assume $ab\in \af$. Consider the ideals $\af'=\af\cup aM$ and $\fb'=\af\cup bM\in \fa$. By our assumption, $\af',\fb'\in \fa$. We have $\af'\fb'\subseteq \af$, which contradicts the fact that $\af'\fb'\in \fa$. The latter holds due to Lemma (\ref{e1}), implying that $\af$ is prime.

(ii) Consider the set $\mathcal{A}$ of all ideals $\af$, such that $\cf\subseteq \af$ and $\af\not \in \fa$. Clearly, $\mathcal{A}$ admits an ordering by inclusion and $\cf\in \mathcal{A}$. We will use Zorn's lemma.

Take any chain $(\af_i)_{i\in I}$ in $\mathcal{A}$. Then $\fb=\bigcup_i\af_i$ is an ideal since any union of ideals of a monoid is an ideal. We claim that $\fb\not \in \fa$. Assume $\fb\in \fa$. We can find a finitely generated ideal $\fb'\in \fa$, such that $\fb'\subseteq \fb=\bigcup_i\af_i$. Since $\fb'$ is finite generated,  $\fb'\in \af_i$ for some $i$. This contradicts the fact that $\af_i\in \mathcal{A}$, proving the claim. Now we can use Zorn's lemma to conclude that $\mathcal{A}$ has a maximal element $\pf$, which is prime thanks to (i) and $\cf\subseteq \pf$.
\end{proof}

\begin{Le}\label{top} Let $M$ be an $s$-Noetherian monoid. Any topology $\fa$ on $M$ is stable and hence, there exists a subset ${\mathcal P}\subseteq {\Spec}(M)$, such that $\fa=\fa_{\mathcal P}$.
\end{Le}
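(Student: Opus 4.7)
The plan is to combine the $s$-Noetherian hypothesis with the criteria for stability established in Proposition \ref{134} and Lemma \ref{135}(ii). The key observation is that $s$-Noetherianness makes the technical hypothesis of Lemma \ref{135}(ii) trivially satisfied: since every right ideal of $M$ is finitely generated, for any $\fb\in\fa$ we may simply take $\af=\fb$ itself as the required finitely generated ideal contained in $\fb$ and lying in $\fa$.

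With that in hand, Lemma \ref{135}(ii) applies directly and yields that for every ideal $\cf\notin\fa$ there exists a prime $\pf\in V(\cf)$ with $\pf\notin\fa$. This is precisely the characterisation of stability provided by Proposition \ref{134}, so $\fa$ is stable, i.e.\ $\fa=\vU\vX(\fa)$. Setting $\mathcal{P}:=\vX(\fa)\subseteq\Spec(M)$, the Galois correspondence of Proposition \ref{06012020}(ii) together with the definition $\fa_{\mathcal{P}}=\vU(\mathcal{P})$ gives $\fa=\fa_{\mathcal{P}}$, which is exactly the conclusion.

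In short, the proof is a two-line reduction: $s$-Noetherian $\Rightarrow$ hypothesis of \ref{135}(ii) holds $\Rightarrow$ hypothesis of \ref{134} holds $\Rightarrow$ $\fa$ is stable $\Rightarrow$ $\fa=\vU(\vX(\fa))=\fa_{\vX(\fa)}$. There is no real obstacle here, as all the conceptual work was already done in Proposition \ref{134}, Lemma \ref{135}, and the setup of the Galois connection; the $s$-Noetherian hypothesis is exactly what is needed to bypass the finiteness proviso in Lemma \ref{135}(ii). The only thing to be mildly careful about is invoking the right parts of the right results in the right order, and verifying that ``every right ideal is finitely generated'' is indeed equivalent to the ascending chain condition used in the definition of $s$-Noetherian (as already recorded in the preceding discussion of Example \ref{snoe}).
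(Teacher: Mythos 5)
Your proof is correct and follows exactly the paper's own route: verify the stability criterion of Proposition \ref{134} via Lemma \ref{135}(ii), whose finiteness hypothesis is automatic for an $s$-Noetherian monoid since every ideal is finitely generated. Your write-up is in fact slightly more explicit than the paper's, which leaves the verification of the hypothesis of Lemma \ref{135}(ii) implicit.
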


\begin{proof} By  Proposition (\ref{134}), we have to show that for any ideal $\cf\not \in \fa$, there exist a prime $\pf\in V(\cf)$, such that $\pf\not \in \fa$. This follows from the second part of Lemma (\ref{135}).
\end{proof} 

This fact, together with Corollary \ref{134''}, implies the following:

\begin{Co}\label{bij_or} Let $M$ be an $s$-Noetherian monoid. There exists a bijection between the topologies of $M$ and open subsets of $\Spec(M)$ in the order topology.
\end{Co}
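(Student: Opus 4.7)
The plan is very short, because all the heavy lifting has already been done in Proposition \ref{134'}, Corollary \ref{134''}, and Lemma \ref{top}. The Galois connection $(\vU, \vX)$ between $\Topo(M)$ and $\mathfrak{P}(\Spec(M))$ from Proposition \ref{06012020} restricts, by the general theory of Galois connections, to a bijection between the stable elements on each side. So the only two things one has to verify are: (a) every topology on $M$ is stable, and (b) the stable subsets of $\Spec(M)$ are exactly the open subsets in the order topology.

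First I would invoke Lemma \ref{top}, which settles (a): when $M$ is $s$-Noetherian, every topology $\fa$ on $M$ satisfies $\fa = \vU\vX(\fa)$. Next I would invoke Proposition \ref{134'}, which settles (b): a subset $\mathcal{P}\subseteq \Spec(M)$ is stable (i.e.\ $\mathcal{P} = \vX\vU(\mathcal{P})$) if and only if it is open in the order topology, since part iv) of Proposition \ref{06012020} shows $\vX\vU(\mathcal{P})$ is the downward closure of $\mathcal{P}$.

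Combining these two observations with the Galois-correspondence bijection between stable elements, one obtains mutually inverse order-reversing maps
$$\vU \colon \ope(\Spec(M)) \longrightarrow \Topo(M), \qquad \vX \colon \Topo(M)\longrightarrow \ope(\Spec(M)),$$
which is exactly Corollary \ref{bij_or}. There is no real obstacle to overcome: the proof is a direct concatenation of Corollary \ref{134''} (the general Galois-connection statement) with Lemma \ref{top} (which says the $s$-Noetherian hypothesis forces every topology to be stable, so there is no restriction to stable topologies on the $\Topo(M)$ side).
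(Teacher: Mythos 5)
Your proposal is correct and coincides with the paper's own argument, which likewise derives the corollary by combining Lemma \ref{top} (every topology is stable under the $s$-Noetherian hypothesis) with Corollary \ref{134''} (the Galois correspondence restricts to a bijection between stable topologies and order-open subsets, via Proposition \ref{134'}). No further comment is needed.
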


\begin{Co}\label{26} Let $M/M^\times$ be a finitely generated monoid and $\ea$ the category of $M$-sets. There is an isomorphism
$$ \Loc({\ea})\cong \ope(\Spec(M)).$$
\end{Co}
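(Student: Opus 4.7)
The proof will be an assembly of three previously established bijections into a single order-preserving isomorphism $\Loc(\ea)\cong\ope(\Spec(M))$.

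My first step would be to upgrade the hypothesis to $s$-Noetherianness of $M$ itself. Since $M/M^\times$ is commutative and finitely generated, Example \ref{snoe}(iii) tells us that $M/M^\times$ is $s$-Noetherian, and Example \ref{snoe}(iv) then transfers this to $M$. This unlocks both Corollary \ref{bij_or} and Lemma \ref{z=or}.

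With $M$ confirmed $s$-Noetherian, I would chain the bijections as follows. Because $\ea=\Sets_M$ is the presheaf topos over the one-object category $M$, the ambient Grothendieck topology $J$ on this category is the minimal one, so $\Topo_J(M)=\Topo(M)$; applying Lemma \ref{2510718}(iii) then yields an order-reversing bijection $\Loc(\ea)\cong\Topo(M)$. Next, the Galois correspondence $(\vU,\vX)$ of Proposition \ref{06012020} is order-reversing by construction, and Corollary \ref{bij_or} asserts that under $s$-Noetherianness it restricts to an honest bijection between $\Topo(M)$ and the open subsets of $\Spec(M)$ in the order topology. Finally, Lemma \ref{z=or} identifies the order topology on $\Spec(M)$ with the Zariski topology under the same hypothesis on $M/M^\times$, so these open subsets are precisely the elements of $\ope(\Spec(M))$. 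Composing the two order-reversing bijections produces the desired order-preserving isomorphism of posets.

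The argument is essentially bookkeeping, so I do not anticipate a substantive obstacle. The one point requiring care is confirming that the composite map preserves inclusions (which is automatic, since two order-reversing bijections compose to an order-preserving one), and that the single hypothesis ``$M/M^\times$ finitely generated'' is precisely what is needed to invoke both Lemma \ref{z=or} directly and Corollary \ref{bij_or} via the reduction in the first paragraph.
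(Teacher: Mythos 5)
Your proof is correct and follows the same route as the paper: reduce to $s$-Noetherianness of $M$, invoke Lemma \ref{2510718}(iii) for $\Loc(\ea)\cong\Topo(M)$, then combine Corollary \ref{bij_or} with Lemma \ref{z=or}. You are in fact slightly more explicit than the paper in spelling out the reduction from ``$M/M^\times$ finitely generated'' to ``$M$ $s$-Noetherian'' via Example \ref{snoe}, which the paper leaves implicit.
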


\begin{proof} We know (see Lemma \ref{2510718}, part iii)) that $ \Loc({\ea})\cong \Topo(M)$ (this is valid for all $M$).  Hence, the result follows from Lemma \ref{z=or} and Corollary \ref{bij_or}.
\end{proof}

\section{The general case}

\subsection{ Preliminaries on monoid Schemes}

Recall that for any commutative monoid $M$ and any $M$-set $A$,  there exist a unique sheaf $\tilde{A}$ of sets on  $\Spec(M)$, such that
$$\Gamma(D(f),\tilde{A})=A_f.$$
Here and elsewhere, $\Gamma(U, \mathcal{F})$ denotes the set of section of a sheaf $\mathcal{F}$ on an open subset $U$ (i.e. $\mathcal{F}(U)$) and $A_f$ is the localisation of $A$ with respect to a submonoid of $M$ generated by $f$. The stalk of $\tilde{A}$  at the point $\pf$ is the localisation $A_\pf$ of $A$ by the submonoid $M\setminus \pf$. If $A=M$, the sheaf $\tilde{M}$ is denoted by ${\mathcal O}_M$. The sheaf ${\mathcal O}_M$ is the sheaf of monoids, while the sheaf $\tilde{A}$ becomes a sheaf of  ${\mathcal O}_M$-sets. The pair $(\Spec(M), {\mathcal O}_M)$ is called an \emph{affine monoid scheme}.

Like in classical algebraic geometry, one can glue affine monoid schemes to obtain \emph{monoid schemes}.

A monoid scheme is called $s$-\emph{Noetherian}, if it can be covered by a finite number of open affine monoid subschemes $Spec(M_i)$, where each $M_i$ is an $s$-Noetherian monoid.  A monoid scheme is called \emph{Noetherian} or \emph{of finite type}, if it can be covered by a finite number of open affine monoid subschemes $Spec(M_i)$, where each $M_i$ is a finitely generated monoid.

Let $X$ be a monoid scheme and $\mathcal{A}$ a sheaf of $\mathcal{O}_X$-sets. That is, a sheaf of sets, together with an action of the monoid $\mathcal{O}_X(U)$ on the set $\mathcal{A}(U)$ for all open  $U\subseteq X$, such that these actions are compatible when $U$ varies. One denotes by $\mathcal{O}_X$-${\sf Sets}$ the category of sheaves of $\mathcal{O}_X$-sets.  

Let $\mathcal{A}$ be a sheaf of $\mathcal{O}_X$-set. It is \emph{quasicoherent} \cite{Lorscheid}, \cite{points} if for any point $x\in X$, there is an open and affine subscheme $U=Spec(M)$, such that $x\in U$ and the restriction of $\mathcal{A}$ on $U$ is isomorphic to a sheaf of the form $\tilde{A}$, for an $M$-set $A$. The category of all quasicoherent sheaves on $X$ is denoted by $\qc(X)$. 

According to \cite{points}, the category of quasi-coherent sheaves over an $s$-Noetherian scheme is a locally $s$-Noetherian topos \cite{points}. The next result describes the subobject classifier of this topos.

\begin{Th}\label{osch} Let $X$ be an $s$-Noetherian monoid scheme. There exists a quasi-coherent sheaf $\oO^X$, such that the restriction of $\oO^X$ on an affine and open monoid subscheme $V=\Spec(M)$ is $\oO^M$. The sheaf $\oO^X$ is the subobject classifier of the topos ${\mathfrak Q}{\mathfrak c}(X)$.
\end{Th}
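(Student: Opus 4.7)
The plan is to construct $\oO^X$ by gluing the affine-local subobject classifiers and then to verify the universal property by reducing to the affine case. Choose a finite cover of $X$ by affine open subschemes $V_i = \Spec(M_i)$ with each $M_i$ an $s$-Noetherian monoid. On each $V_i$ we take the quasi-coherent sheaf $\wt{\oO^{M_i}}$ associated to the $M_i$-set $\oO^{M_i}$ of right ideals of $M_i$. To glue these local pieces we must check that on each overlap the induced restrictions agree canonically and satisfy the cocycle condition. Since principal opens form a basis on each affine $V_i$, it suffices to check that for any $f \in M_i$ the restriction of $\wt{\oO^{M_i}}$ to $D(f) = \Spec(M_{i,f})$ is canonically isomorphic to $\wt{\oO^{M_{i,f}}}$. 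By the definition of the tilde construction, $\wt{\oO^{M_i}}|_{D(f)} = \wt{(\oO^{M_i})_f}$, where the subscript denotes localization with respect to $S = \{1, f, f^2, \dots\}$; the required isomorphism $(\oO^{M_i})_f \cong \oO^{M_{i,f}}$ is then precisely Proposition \ref{oOloc}. Its functoriality under iterated localization (a direct consequence of Lemma \ref{020218}) gives the cocycle condition, so the local sheaves assemble into a single quasi-coherent sheaf $\oO^X$ on $X$.

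For the universal property, define the truth arrow $t: 1 \to \oO^X$ locally on each $V_i$ by sending the unique section to the top ideal $M_i \in \oO^{M_i}$; these maps are compatible under the isomorphisms of Proposition \ref{oOloc} because the top ideal is stable under localization, by part iv) of Lemma \ref{21247}. Given a subobject $\mathcal{B} \hookrightarrow \mathcal{A}$ in $\qc(X)$, restrict to each $V_i$ to obtain an inclusion of $M_i$-sets $B_i \subseteq A_i$ under the identification $\qc(V_i) \simeq \Sets_{M_i}$. The characteristic map $\chi_{B_i}: A_i \to \oO^{M_i}$ from Section 3.1 induces a morphism $\wt{\chi_{B_i}}: \mathcal{A}|_{V_i} \to \oO^X|_{V_i}$. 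Naturality of the subobject classifier under the localization homomorphism $M_i \to M_{i,f}$, combined with the identification furnished by Proposition \ref{oOloc}, ensures that these local characteristic maps agree on overlaps and so glue to a global morphism $\chi_{\mathcal{B}}: \mathcal{A} \to \oO^X$. That the resulting square is a pullback in $\qc(X)$ is affine-local and reduces to the pullback property of $\chi_{B_i}$ in $\Sets_{M_i}$; uniqueness follows similarly.

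The main obstacle is the gluing step, which rests squarely on Proposition \ref{oOloc}: without the $s$-Noetherian hypothesis, the canonical map $S^{-1}\oO^M \to \oO^{S^{-1}M}$ is merely surjective, so the affine-local sheaves $\wt{\oO^{M_i}}$ would fail to assemble into a quasi-coherent sheaf whose restriction to each affine open is again of the prescribed form. A subsidiary point worth spelling out is that $\wt{\oO^M}$ must classify subobjects inside the quasi-coherent subtopos, not merely inside the ambient topos of $\mathcal{O}_{\Spec(M)}$-sets; this is automatic in the affine case because the equivalence $\qc(\Spec(M)) \simeq \Sets_{M}$ sends quasi-coherent subsheaves to $M$-subsets, and the property propagates to $X$ through the affine cover by the gluing construction.
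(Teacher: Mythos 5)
Your proposal is correct and follows essentially the same route as the paper: the sheaf $\oO^X$ is assembled from the affine-local subobject classifiers $\oO^{M_i}$ using Proposition \ref{oOloc} to identify $S^{-1}\oO^{M}$ with $\oO^{S^{-1}M}$ on overlaps, and the global characteristic map is obtained by gluing the local ones, whose agreement on intersections the paper deduces directly from the uniqueness clause of the local classifying property (where you invoke naturality under localization — a harmless variation). Your added remarks on the necessity of the $s$-Noetherian hypothesis and on classifying within the quasi-coherent subtopos are sound and merely make explicit points the paper leaves implicit.
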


\begin{proof} The existence of such a sheaf follows from Proposition \ref{oOloc}. Take quasi-coherent sheaves $\mathcal{B}$ and $\mathcal{A}$ and assume that $\mathcal{A} \subseteq \mathcal{B}$. We know that $\oO^M$ is a subobject classifier in the topos of $M$-sets. Hence, locally, there exist a unique morphism of sheaves $:\xi_{\mathcal{B}}:\mathcal{A}\to \oO^X$, as in the definition of the subobject classifier. Uniqueness implies that these local maps agree on the intersection of affine open subsets. This shows that they are restrictions of a uniquely defined morphism of sheaves $\mathcal{A}\to \oO^X$.
\end{proof}

\subsection{The centre of $\qc(X)$}

\begin{Le}\label{msz} Let $X$ be an $s$-Noetherian monoid scheme. Then $${\mathcal Z}(\qc(X))\cong \Gamma(X,\mathcal{O}_X).$$
\end{Le}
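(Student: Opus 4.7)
The plan is to prove the lemma by reducing, via the gluing pullback of Proposition \ref{lnt}, to the affine case.

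\textbf{Affine case.} If $X=\Spec(M)$, then $\qc(X)\simeq \Sets_M$. Any central element $\theta$ is determined by $\theta_M\in \End_M(M)$; writing $m=\theta_M(1)$, $M$-equivariance forces $\theta_M(n)=mn$, and naturality against the morphism $\varphi_a\colon M\to A$ defined by $\varphi_a(1)=a$ forces $\theta_A(a)=am$ on every right $M$-set $A$. Conversely, since $M$ is commutative, right multiplication by $m\in M$ is a morphism of right $M$-sets natural in $A$, yielding the inverse assignment. Hence $\mathcal{Z}(\qc(\Spec M))\cong M=\Gamma(\Spec M,\mathcal{O}_M)$.

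\textbf{Gluing induction.} Since $X$ is $s$-Noetherian, every open subscheme of $X$ admits a finite cover by open affines. I would prove $\mathcal{Z}(\qc(U))\cong \Gamma(U,\mathcal{O}_X)$ for every open $U\subseteq X$ by strong induction on the minimal size of such a cover, the base case being the affine case above. For the inductive step, write $U=V\cup W$ with $V$ a proper open of $U$ and $W$ affine. By \cite{points}, $\qc(U)$ is the gluing of $\qc(V)$ and $\qc(W)$ along $\qc(V\cap W)$, so Proposition \ref{lnt} supplies the pullback
$$\mathcal{Z}(\qc(U))\cong \mathcal{Z}(\qc(V))\times_{\mathcal{Z}(\qc(V\cap W))} \mathcal{Z}(\qc(W)),$$
while the sheaf axiom for $\mathcal{O}_X$ delivers the analogous pullback at the level of sections. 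The intersection $V\cap W$ is open in the $s$-Noetherian affine $W$, hence covered by finitely many principal opens of $W$; invoking Lemma \ref{a290120}, which identifies $\qc$ of such a principal open with the corresponding localisation category, a parallel induction on the number of these principal opens inside $W$ places $V\cap W$ within the hypothesis.

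\textbf{Main obstacle.} The delicate point is identifying the two pullback squares canonically, i.e. verifying that the restriction map $\mathcal{Z}(\qc(U))\to \mathcal{Z}(\qc(V))$ attached to an inclusion of opens corresponds, through the identifications, to the structure-sheaf restriction $\mathcal{O}_X(U)\to \mathcal{O}_X(V)$. For a basic inclusion $\Spec(S^{-1}M)\hookrightarrow\Spec(M)$ this is transparent from the affine case: right multiplication by $m$ on any $S^{-1}M$-set agrees with right multiplication by $m/1$. This naturality then propagates through the gluing, underwriting the comparison of pullbacks at each inductive step and yielding $\mathcal{Z}(\qc(X))\cong \Gamma(X,\mathcal{O}_X)$.
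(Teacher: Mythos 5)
Your proposal is correct and follows essentially the same route as the paper: the affine case via the Yoneda-type argument identifying $\mathcal{Z}(\Sets_M)\cong M$, and the general case by gluing open subschemes and applying the pullback of centres from Proposition \ref{lnt}. You supply more detail than the paper does (the double induction on affine and principal-open covers, and the compatibility of the centre-restriction maps with the structure-sheaf restrictions), but the underlying argument is the same.
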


\begin{proof} We first consider the case when $X=\Spec(M)$ is affine. In this case, the result is classical and it follows basically from the Yoneda lemma, since for any $M$-set $A$, we have a functorial isomorphism
$$A\cong \Hom_M(M,A).$$
This shows that $M$ represents the identity functor on the category of $M$-sets. For a general monoid scheme $X$, observe that if $U$ and $V$ are open monoid subschemes of $X$, the category $\qc(U\cup V)$ is equivalent to the gluing of $\qc(U)$ and $\qc(V)$ along $\qc(U\cap V)$. Lemma \ref{lnt} finishes the proof.
\end{proof}

\subsection{The proof of main theorem}\label{proof_mth}

The aim of this subsection is to prove the main Theorem, described in the introduction. We will need one preliminary lemma before that though. To state it, let us fix some terminology.

If $\iota:W_1\subseteq W_2$  is the inclusion of open monoid subschemes, then $\iota^*:\ope(W_2)\to \ope(W_1)$ is a map given by $L\mapsto L\cap W_1$, $L\in \ope(W_2)$. We also have a similar map
$$\iota^*: \Loc(\mathfrak{Qc}(W_2)) \to \Loc(\mathfrak{Qc}(W_1)),$$
which sends a localising subcategory $\D$ of $\mathfrak{Qc}(W_2)$ to the intersection $\D\cap {\frak Q}{\frak c}(W_1)$. Here, we consider $\mathfrak{Qc}(W_1)$ as a localising subcategory of ${\frak Q}{\frak c}(W_2)$ (see \cite[4.3.2]{points}) and use the fact that the intersection of localising subcategories in a topos is again a localising subcategory \cite[Theorem 6.8 and Example 6.9 iv)]{bk}.

We have a map $\Phi(X):\ope(X)\to \Loc({\mathfrak Q}{\mathfrak c}(X))$, given by $\Phi(U)=\Loc({\mathfrak Q}{\mathfrak c}(U))$. By construction, $\Phi$ is compatible with $\iota$, meaning that the following  diagram commutes
$$\xymatrix{ \ope(W_2)\ar[rr]^{\Phi(W_2)}\ar[d]_{\iota^*} & & \Loc({\mathfrak Q}{\mathfrak c}(W))\ar[d]^{\iota^*} \\
			 \ope(W_1)\ar[rr]_{\Phi(W_1)} & & \Loc({\mathfrak Q}{\mathfrak c}(W_1)).}$$

\begin{Le}\label{290102} Let $X$ be a monoid scheme of finite type. Assume $U_1$ and $U_2$ are open monoid subschems of $X$ and $U_{12}=U_1\cap U_2$, $V=U_1\cup U_2$. Then both
$$\xymatrix{ \ope(V)\ar[r]^{\iota^*_1}\ar[d]^{\iota^*_2} & \ope(U_1)\ar[d]^{\iota_3^*} \\
			 \ope(U_2)\ar[r]_{\iota_4^*} & \ope(U_{12})}$$
and
$$\xymatrix{ \Loc({\mathfrak Q}{\mathfrak c}(V))\ar[r]^{\iota_1^*} \ar[d]^{\iota_2^*} &  \Loc({\mathfrak Q}{\mathfrak c}(U_1))\ar[d]^{\iota^*_3} \\
			 \Loc({\mathfrak Q}{\mathfrak c}(U_2))\ar[r]_{\iota_4^*} & \Loc({\mathfrak Q}{\mathfrak c}(U_{12}))}$$
are pullback diagrams. Here, $\iota_1, \iota_2, \iota_3,\iota_4$ are the appropriate inclusions.
\end{Le}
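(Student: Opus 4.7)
The plan is to treat the two squares in parallel but with different tools: the first is purely point-set and the second uses the gluing description of quasi-coherent sheaves combined with the fact that taking intersections with a localising subcategory is functorial.

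For the open-set square, I would argue directly. Since $V=U_1\cup U_2$, every $W\in\ope(V)$ satisfies $W=(W\cap U_1)\cup(W\cap U_2)$, and conversely given $W_i\in\ope(U_i)$ with $W_1\cap U_{12}=W_2\cap U_{12}$, the union $W:=W_1\cup W_2\subseteq V$ is open because $W\cap U_i=W_i$ (here one uses the compatibility to rule out extra points in $W\cap U_j$ for $j\neq i$). Uniqueness of $W$ given $(W_1,W_2)$ follows from the same decomposition. This gives the first pullback.

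For the Loc square, the key input is the description of $\mathfrak{Qc}(V)$ as the gluing of $\mathfrak{Qc}(U_1)$ and $\mathfrak{Qc}(U_2)$ along $\mathfrak{Qc}(U_{12})$ (which for $s$-Noetherian monoid schemes is available, as used just before Proposition \ref{lnt}). Objects of $\mathfrak{Qc}(V)$ are triples $(\mathcal{A}_1,\mathcal{A}_2,\alpha)$ with $\mathcal{A}_i\in\mathfrak{Qc}(U_i)$ and $\alpha:\rho_1^*\mathcal{A}_1\xto{\sim}\rho_2^*\mathcal{A}_2$ in $\mathfrak{Qc}(U_{12})$. The natural map $\iota^*_1\times\iota^*_2:\Loc(\mathfrak{Qc}(V))\to \Loc(\mathfrak{Qc}(U_1))\times\Loc(\mathfrak{Qc}(U_2))$ lands in the fibre product, since $\D\cap\mathfrak{Qc}(U_1)\cap\mathfrak{Qc}(U_{12})=\D\cap\mathfrak{Qc}(U_{12})=\D\cap\mathfrak{Qc}(U_2)\cap\mathfrak{Qc}(U_{12})$ by associativity of intersection.

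Given a compatible pair $(\D_1,\D_2)$ with $\iota_3^*\D_1=\iota_4^*\D_2=:\D_0$ inside $\mathfrak{Qc}(U_{12})$, I define
\[
\D:=\{(\mathcal{A}_1,\mathcal{A}_2,\alpha)\in\mathfrak{Qc}(V)\mid \mathcal{A}_1\in\D_1,\ \mathcal{A}_2\in\D_2\}.
\]
Note that for such a triple, $\rho_i^*\mathcal{A}_i\in\D_0$ automatically, so the compatibility is consistent. Verifying $\D\in\Loc(\mathfrak{Qc}(V))$: closure under isomorphism is clear; the reflection is $(\mathcal{A}_1,\mathcal{A}_2,\alpha)\mapsto(\rho_{\D_1}\mathcal{A}_1,\rho_{\D_2}\mathcal{A}_2,\bar\alpha)$, where $\rho_{\D_i}$ is the reflection of $\D_i\subseteq\mathfrak{Qc}(U_i)$ and $\bar\alpha$ is obtained by applying $\rho_{\D_0}$ to $\alpha$ (using the compatibility to identify $\rho_i^*\rho_{\D_i}\mathcal{A}_i$ with $\rho_{\D_0}\rho_i^*\mathcal{A}_i$). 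Since finite limits in a gluing topos are computed componentwise and each $\rho_{\D_i}$ preserves finite limits, so does the reflection onto $\D$. Finally, $\iota_i^*\D=\D_i$ by construction and $\D$ is uniquely determined by $(\D_1,\D_2)$: any localising subcategory of $\mathfrak{Qc}(V)$ with these restrictions must coincide with $\D$, because a triple lies in a localising subcategory iff its components do (its components are its images under $pr_i$, which are localisations).

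The main obstacle I expect is the verification that the component-wise reflection actually preserves finite limits and produces a genuine left adjoint; this requires a careful compatibility check between $\rho_{\D_i}$ and the transition isomorphism $\alpha$, which is where the identification $\iota_3^*\D_1=\iota_4^*\D_2$ is essential. Once that is in place, the two constructions are mutually inverse and the Loc square is a pullback.
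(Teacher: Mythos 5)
Your route through the gluing description of $\mathfrak{Qc}(V)$ is genuinely different from the paper's: the paper converts localising subcategories into Lawvere--Tierney topologies, i.e.\ into endomorphisms of the subobject classifier $\Omega^V$, and then uses Theorem \ref{osch} (that $\Omega^V$ is quasi-coherent and restricts to $\Omega^{U_i}$ on $U_i$), so the whole question reduces to gluing morphisms of sheaves. Your treatment of the open-set square is fine and is what the paper means by ``follows from the definition''.

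For the $\Loc$ square, however, the load-bearing compatibilities are asserted where they need proof, and one of them is claimed to be ``automatic'' when it is not. (1) That $\mathcal{A}_1\in\D_1$ forces $\rho_1^*\mathcal{A}_1\in\D_0=\D_1\cap\mathfrak{Qc}(U_{12})$ is precisely the statement that $\D_1$ is stable under the reflection onto the open subtopos $\mathfrak{Qc}(U_{12})\subseteq\mathfrak{Qc}(U_1)$; for an arbitrary pair of localising subcategories, reflection onto one does not preserve the other. It does hold here, but for a specific reason you must supply: the reflection onto an open subtopos is exponentiation by the subterminal object corresponding to $U_{12}$, and a localising subcategory, being reflective with left exact reflector, is an exponential ideal. (2) The identification $\rho_1^*\circ\rho_{\D_1}\cong\rho_{\D_0}\circ\rho_1^*$ should be proved, e.g.\ by checking that both composites are left adjoint to the same inclusion $\D_0\hookrightarrow\mathfrak{Qc}(U_1)$ (using $\D_0\subseteq\D_1$ and $\D_0\subseteq\mathfrak{Qc}(U_{12})$), hence canonically isomorphic. (3) Your uniqueness step, ``a triple lies in a localising subcategory iff its components do'', needs the exponential-ideal fact again for the ``only if'' direction, and for ``if'' the observation that an object $F$ of $\mathfrak{Qc}(V)$ is recovered as the pullback of its restrictions to $U_1$ and $U_2$ over $U_{12}$, together with closure of localising subcategories under finite limits. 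With these three points filled in, your argument goes through; as written, the items you flag as ``the main obstacle'' are exactly the unproved steps, and the paper avoids them entirely by gluing endomorphisms of $\Omega$ rather than gluing reflectors.
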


\begin{proof} Commutativity of the first diagram follows from the definition.

To prove the second part, observe that $\Loc({\mathfrak Q}{\mathfrak c}(V))$ is in one-to-one correspondence with the Lawvere-Tierney topologies on the topos ${\mathfrak Q}{\mathfrak c}(V)$ (see iii) and iv) of Lemma \ref{2510718}). Thus, we can and we will work with Lawvere-Tierney topologies.

Any such topology $j$ on ${\mathfrak Q}{\mathfrak c}(V)$ is a morphism of sheaves $\Omega^V\to \Omega^V$ with appropriate properties. The restriction of $\Omega^V$ on $U_1$ (res. $U_2$, $U_{12}$) is $\Omega^{U_1}$ (res. $\Omega^{U_2}$,$\Omega^{U_{12}}$). The result follows from the fact that for any two morphisms $\Omega^{U_1}\to \Omega^{U_1}$ and $\Omega^{U_2}\to \Omega^{U_2}$, which restrict to the same morphisms $\Omega^{U_{12}}\to \Omega^{U_{12}}$, there exists a unique morphism $\Omega^{V}\to \Omega^{V}$, which restrict to the given morphisms.
\end{proof}

We are now in a position to prove Theorem \ref{maintheorem}, the main theorem of this paper.

\begin{proof}[Proof of Theorem \ref{maintheorem}]

i) and iv) As already sated in the introduction, parts i) and iv) were already proven in \cite{points}.

ii) Observe that by Lemma \ref{290102}, it suffices to restricts ourself to the affine case. This is a consequence of Corollary \ref{26} and Lemma \ref{a290120}.

iii) This follows from Lemma \ref{msz}.

v) Let $p=(p_*,p^*)$ be a point of the topos $\ea$ and $\D$ a localising subcategory of $\ea$. Denote the canonical inclusion $\D\subset \ea$ by $\iota$ and the corresponding localisation functor by $\rho:\ea\to \D$. The pair of morphisms $(\iota,\rho)$ is a geometric morphism  $\D\to \ea$. By our definition, $p\fork \D$ if and only if $p_*:\Sets\to \ea$  factors through $\D$. If this happens, the induced functor $\Sets\to \D$ will be the direct image part of a geometric morphism $\Sets \to \D$ and hence, give rise to a topos point of $\D$.

Equivalently, this happens if and only if $p^*:\ea\to \Sets$ factors through the functor $\rho:\ea\to \D$. That is to say, there exists a functor $p_1:\D\to\Sets$, such that $p^*=p_1\circ \rho$.

Let $\mathfrak{Qc}(X)$ be our topos, $x\in X$ a point and $U$ an open subset of $X$. In this case, the functor $\rho:\mathfrak{Qc}(X)\to\mathfrak{Qc}(U)$ is just the restriction functor. If $x\in U$, then the stalk functor $\Pi(x):\mathfrak{Qc}(X)\to \Sets$ factors through $\mathfrak{Qc}(U)\to \Sets$. Hence, $\Pi(x)\fork \mathfrak{Qc}(U)$. Conversely, if $\Pi(x)\fork \mathfrak{Qc}(U)$, then $\Pi(x)=p_1\circ \rho$, where $p_1$ is the inverse image functor of a topos point of $\mathfrak{Qc}(U)$. As shown in \cite{points}, it corresponds to the stalk at some point $y\in U$. The equality $\Pi(x)=p_1\circ \rho$ shows that for any quasi-coherent sheaf $F$ defined on $X$, we have $F_x=F_y$. Thus $x=y$ and hence $x\in U$.

\end{proof}

\end{document}